\documentclass[a4paper,11pt]{article}

\usepackage{amsmath}
\usepackage{amssymb}
\usepackage{amsthm}
\usepackage{amsfonts}
\usepackage{mathtools}
\usepackage{xcolor}
\usepackage{url}

\theoremstyle{plain}
\newtheorem{theorem}{Theorem}[section]
\newtheorem{proposition}{Proposition}[section]
\newtheorem{lemma}{Lemma}[section]
\newtheorem{corollary}{Corollary}[section]

\theoremstyle{definition}
\newtheorem{definition}{Definition}[section]

\theoremstyle{remark}
\newtheorem{remark}{Remark}[section]

\newcommand{\Kf}{\operatorname{Kf}}
\newcommand{\HT}{\operatorname{HT}}

\begin{document}

\title{\textbf{A Decomposition of the Hitting Time Index Using the Kirchhoff Index and an Asymmetry Term}
\vspace{15mm}}

\author{
Shunya Tamura (corresponding author) \\
Okegawa City Okegawa West Junior High School \\
3680-1 Kawataya, Okegawa, Saitama 363-0027, Japan \\
Email: shunya.tamura059@gmail.com \\
\\ \\
Jos\'e Luis Palacios \\
Department of Electrical and Computer Engineering, University of New Mexico \\
Albuquerque, NM 87131, USA \\
jpalacios@unm.edu \\
\\ \\
Aleksandar Petojevi\'c \\
Faculty of Education, University of Novi Sad \\
Podgori\v{c}ka 4, 25000 Sombor, Republic of Serbia \\
apetoje@uns.ac.rs
}

\date{}

\thispagestyle{empty}
\maketitle
\vspace{50mm}
\clearpage
\begin{abstract}
We study the hitting time index,
a graph invariant defined in terms of expected hitting times
of simple random walks on finite connected graphs,
through its relation with the Kirchhoff index.
Using the commute time identity and Tetali's formula,
we decompose the hitting time index into a Kirchhoff-index term
and a nonnegative asymmetry term expressed by
degree-weighted effective resistance sums.
We derive lower and upper bounds for the asymmetry term,
together with their equality conditions,
and obtain a lower bound for the hitting time index
in terms of the range of the degree-weighted resistance sums.
For trees, we obtain lower and upper bounds for the asymmetry term,
with the upper bound expressed in terms of the Wiener index,
and characterize the equality case in the lower bound.
In particular, we show that among all trees of fixed order,
the hitting time index is minimized uniquely by the star graph.

Finally, we apply the decomposition to complete bipartite graphs,
paths, and graphs obtained by conjoining complete graphs,
and recover several known hitting-time formulas.
\end{abstract}

\noindent
{\bf Keywords:}
hitting time index, Kirchhoff index, effective resistance,
Wiener index, random walk, graph invariant.

\noindent
{\bf 2020 Mathematics Subject Classification:}
Primary 05C81; Secondary 05C50, 60J10.
\section{Introduction}
\label{sec:introduction}
Random walks on graphs provide a natural connection
between probabilistic methods and structural graph theory.
Among the associated quantities, hitting times are closely related
to effective resistance and classical graph invariants
such as the Kirchhoff index.
Recently, Palacios \cite{Palacios2025}
introduced the hitting time index,
which aggregates the asymmetric behavior of pairwise hitting times.
From this point of view, relations among the hitting time index,
the Kirchhoff index, and the Wiener index provide a natural connection
between random walks on graphs and classical graph invariants.

Let $G=(V,E)$ be a finite connected simple graph,
and consider the simple random walk on $G$.
For vertices $u,v\in V$, let $H_{uv}$ denote
the expected hitting time from $u$ to $v$.
Hitting times are basic quantities in the study of random walks on graphs
and are closely related to effective resistance in electrical networks
\cite{DoyleSnell1984,ChandraEtAl1989,Tetali1991}.

The hitting time index is defined by
\[
\HT(G)
=
\sum_{\{u,v\}\subseteq V}
\max\{H_{uv},H_{vu}\}.
\]
Since then, several results on the hitting time index,
including bounds and explicit formulas for some graph families,
have been studied
\cite{PalaciosSantamaria2025,
HeEtAl2026,OrlicEtAl2026,PalaciosPetojevic2026}.

Regarding each edge as a unit resistor,
let $R_G(u,v)$ denote the effective resistance between $u$ and $v$.
The Kirchhoff index is defined by
\[
\Kf(G)
=
\sum_{\{u,v\}\subseteq V}
R_G(u,v)
\]
\cite{KleinRandic1993,GutmanMohar1996}.
If $G$ has $m$ edges, then the commute time identity gives
\[
H_{uv}+H_{vu}
=
2mR_G(u,v)
\]
\cite{ChandraEtAl1989,Tetali1991}.

Palacios \cite{Palacios2025} showed that
\[
\HT(G)\geq m\Kf(G).
\]
He also considered graphs satisfying
\[
H_{uv}=H_{vu}
\]
for every pair of vertices $u,v$,
calling them highly symmetric graphs.
For such graphs,
\[
\HT(G)=m\Kf(G).
\]
Palacios \cite{Palacios2025} also noted that
walk-regular graphs are highly symmetric.

In the present paper, we study the difference
\[
\HT(G)-m\Kf(G).
\]
Using the commute time identity together with Tetali's formula,
we show that this difference admits the explicit representation
\[
\frac12
\sum_{\{u,v\}\subseteq V}
|H_{uv}-H_{vu}|
=
\frac12
\sum_{\{u,v\}\subseteq V}
|R_d(u)-R_d(v)|,
\]
where
\[
R_d(u)
=
\sum_{x\in V}d(x)R_G(u,x).
\]
Accordingly, we define
\[
\mathcal A(G)
=
\frac12
\sum_{\{u,v\}\subseteq V}
|R_d(u)-R_d(v)|
\]
and write
\[
\HT(G)=m\Kf(G)+\mathcal A(G).
\]
We call $\mathcal A(G)$ the asymmetry term.
Thus, $\mathcal A(G)$ measures precisely the contribution to the hitting time index
arising from the asymmetry of hitting times.
The proof of this decomposition is given in
Section~\ref{sec:decomposition}.

We then study $\mathcal A(G)$ by arranging the values of $R_d$
in nondecreasing order and expressing it as a weighted sum
of consecutive differences.
From this representation, we obtain lower and upper bounds in terms of
\[
R_d^{\max}-R_d^{\min},
\]
together with their equality conditions.
Combining the lower bound with the decomposition gives
\[
\HT(G)
\geq
m\Kf(G)
+
\frac{n-1}{2}
\left(
R_d^{\max}-R_d^{\min}
\right).
\]

We also consider trees.
Since effective resistance is equal to graph distance in a tree,
the Kirchhoff index coincides with the Wiener index.
For every tree $T$ with $n\geq3$ vertices, we obtain
\[
(n-1)(n-2)
\leq
\mathcal A(T)
\leq
(n-2)W(T).
\]
The lower bound is attained if and only if
\[
T\cong K_{1,n-1}.
\]
Consequently,
\[
(n-1)W(T)+(n-1)(n-2)
\leq
\HT(T)
\leq
(2n-3)W(T).
\]
Combining the lower bound with the known minimum of the Wiener index among trees,
we also recover that the star minimizes the hitting time index among trees
of fixed order.

In Section~\ref{sec:examples},
we apply the decomposition to complete bipartite graphs,
path graphs, and graphs obtained by conjoining complete graphs.
For paths, the decomposition gives a short derivation
of the known formula for the hitting time index.
For conjoined complete graphs, the corresponding hitting time indices
can be recovered from the Kirchhoff index and the asymmetry term
without calculating all individual hitting times.

The paper is organized as follows.
In Section~\ref{sec:preliminaries},
we recall basic facts about hitting times,
effective resistance, the Kirchhoff index,
and the Wiener index.
In Section~\ref{sec:decomposition},
we discuss the decomposition of the hitting time index
and the asymmetry term.
In Section~\ref{sec:bounds},
we study a representation and bounds for the asymmetry term.
In Section~\ref{sec:trees},
we consider applications to trees.
In Section~\ref{sec:examples},
we give explicit calculations for several graph families.
Finally, in Section~\ref{sec:conclusion},
we summarize the results.

\section{Preliminaries}
\label{sec:preliminaries}
In this section, we recall some basic facts about hitting times,
effective resistance, the Kirchhoff index, and the Wiener index.

Throughout this paper, let $G=(V,E)$ be a finite connected simple graph
with $n\geq2$ vertices and $m$ edges.
For a vertex $u\in V$, let $d(u)$ denote its degree.

We consider the simple random walk on $G$.
For a vertex $v\in V$, let
\[
T_v=\inf\{t\geq0:X_t=v\}
\]
be the first hitting time of $v$, and define
\[
H_{uv}=\mathbb E_u[T_v].
\]
Thus, $H_{uv}$ is the expected hitting time from $u$ to $v$.

Throughout the paper, a sum over $\{u,v\}\subseteq V$
is taken over all unordered pairs of distinct vertices.
The hitting time index introduced by Palacios \cite{Palacios2025} is
\[
\HT(G)
=
\sum_{\{u,v\}\subseteq V}
\max\{H_{uv},H_{vu}\}.
\]

Regarding each edge as a unit resistor,
let $R_G(u,v)$ denote the effective resistance between $u$ and $v$.
The Kirchhoff index is defined by
\[
\Kf(G)
=
\sum_{\{u,v\}\subseteq V}
R_G(u,v)
\]
\cite{KleinRandic1993,GutmanMohar1996}.

For vertices $u,v\in V$, let $d_G(u,v)$ denote the graph distance.
The Wiener index is defined by
\[
W(G)
=
\sum_{\{u,v\}\subseteq V}
d_G(u,v).
\]

We first recall the commute time identity.

\begin{theorem}[\cite{ChandraEtAl1989,Tetali1991}]
\label{thm:commute}
For any $u,v\in V$,
\[
H_{uv}+H_{vu}
=
2mR_G(u,v).
\]
\end{theorem}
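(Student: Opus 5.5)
The plan is to prove the commute time identity by the standard electrical-network argument: express hitting times to a fixed target as a harmonic function, superpose two such functions, and read off the effective resistance.

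\textbf{Step 1 (harmonic characterization).} Fix a target $v$ and set $h_v(x)=H_{xv}$. Conditioning on the first step of the walk gives $h_v(v)=0$ and, for $x\neq v$,
\[
h_v(x)=1+\frac{1}{d(x)}\sum_{y\sim x}h_v(y).
\]
Equivalently,
\[
\sum_{y\sim x}\bigl(h_v(x)-h_v(y)\bigr)=d(x)\qquad (x\neq v).
\]
Summing the left side over all $x\in V$ gives $0$, because each edge contributes with opposite signs. Hence at $x=v$ the sum equals $d(v)-2m$. Interpreting $h_v$ as a voltage, this says that current $d(x)$ is injected at every vertex $x$, and the total $2m$ is extracted at $v$.

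\textbf{Step 2 (superposition).} Do the same with target $u$, giving $h_u(x)=H_{xu}$: current $d(x)$ enters at each $x$ and $2m$ leaves at $u$. Now consider $\phi=h_v-h_u$. Its net current at each vertex is $0$, except $+2m$ at $u$ and $-2m$ at $v$. This is linearity of the Laplacian $L$: we have $L\phi=2m(e_u-e_v)$.

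\textbf{Step 3 (effective resistance).} By definition, $R_G(u,v)$ is the potential difference $\psi(u)-\psi(v)$ for the unit flow from $u$ to $v$, i.e.\ for any solution of $L\psi=e_u-e_v$. Solutions are unique up to additive constants, since $G$ is connected and so $\ker L$ consists of the constants. Therefore
\[
\phi(u)-\phi(v)=2mR_G(u,v).
\]
On the other hand, using $h_v(v)=h_u(u)=0$,
\[
\phi(u)-\phi(v)=\bigl(H_{uv}-0\bigr)-\bigl(0-H_{vu}\bigr)=H_{uv}+H_{vu},
\]
which is the claim.

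The main obstacle is the rigor of Step 1. One must know that $H_{xv}$ is finite, which holds for an irreducible finite chain. One must also know that the equations determine $h_v$ uniquely, which follows from the maximum principle for harmonic functions on a connected graph with boundary $\{v\}$. I would also state explicitly which definition of effective resistance is used (the potential difference under unit current, or equivalently the Laplacian pseudoinverse), so that Step 3 is immediate. Note that $u=v$ is trivial.
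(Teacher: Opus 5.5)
The paper does not prove this identity; it quotes it from \cite{ChandraEtAl1989,Tetali1991}. Your argument is correct, and it is essentially the classical proof of Chandra, Raghavan, Ruzzo, Smolensky and Tiwari. The hitting-time function satisfies $Lh_v=d-2m e_v$, where $d$ is the degree vector. Hence $h_v-h_u$ solves $L\phi=2m(e_u-e_v)$, and since $\ker L$ consists of the constants on a connected graph, $\phi(u)-\phi(v)=2mR_G(u,v)$.

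Within the paper there is also a one-line derivation. Add Tetali's formula (Theorem~\ref{thm:tetali}) for $H_{uv}$ to the same formula for $H_{vu}$: the terms $\pm\frac12\bigl(R_d(v)-R_d(u)\bigr)$ cancel and $2mR_G(u,v)$ remains. That route only relocates the work, because Tetali's formula is strictly stronger. Its standard proof starts from the same equation $Lh_v=d-2me_v$ and writes $d-2me_v=\sum_x d(x)(e_x-e_v)$. It then needs, in addition, the three-point identity: under a unit current from $x$ to $v$, the potential difference between $u$ and $v$ equals $\frac12\bigl(R_G(u,v)+R_G(x,v)-R_G(u,x)\bigr)$.

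So your approach is leaner, needing only the Laplacian characterization of $R_G$. Tetali's approach gives the individual hitting times, and hence the antisymmetric identity $H_{uv}-H_{vu}=R_d(v)-R_d(u)$ on which the rest of the paper rests.

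One correction to your closing remarks: you do not need uniqueness of the solution of the boundary-value problem for $h_v$. You define $h_v(x)=H_{xv}$ directly, and first-step analysis shows that this particular function satisfies the equations; this requires finiteness of $H_{xv}$, which you correctly note. The only uniqueness your argument uses is that $\ker L$ consists of the constants, which you already invoke in Step 3. The maximum-principle discussion can therefore be dropped.
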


We also use Tetali's formula.

\begin{theorem}[\cite{Tetali1991}]
\label{thm:tetali}
For any $u,v\in V$,
\[
H_{uv}
=
mR_G(u,v)
+
\frac12
\sum_{x\in V}
d(x)
\left(
R_G(v,x)-R_G(u,x)
\right).
\]
\end{theorem}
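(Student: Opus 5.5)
The plan is to prove Tetali's formula by linear algebra with the graph Laplacian $L=D-A$ and its Moore--Penrose pseudoinverse $L^{+}$. We express both the hitting times and the effective resistances through $L^{+}$ and then compare the two expressions.

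First, fix the target $v$ and set $\varphi(u)=H_{uv}$. A first-step analysis gives $\varphi(v)=0$ and $\varphi(u)=1+\frac{1}{d(u)}\sum_{w\sim u}\varphi(w)$ for $u\neq v$. Hence $(L\varphi)(u)=d(u)$ for every $u\neq v$. Since $\mathbf 1^{T}L=0$, the entries of $L\varphi$ sum to zero, so $(L\varphi)(v)=d(v)-2m$. Writing $\mathbf d$ for the degree vector, this says
\[
L\varphi=\mathbf d-2m\,e_v .
\]
Because $G$ is connected, $\ker L$ is spanned by $\mathbf 1$. The right-hand side is orthogonal to $\mathbf 1$, so $\varphi=L^{+}(\mathbf d-2m e_v)+c\mathbf 1$ for some constant $c$. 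The constant cancels in differences, and $\varphi(v)=0$, so
\[
H_{uv}=\varphi(u)-\varphi(v)=(e_u-e_v)^{T}L^{+}(\mathbf d-2m\,e_v).
\]
The delicate point in this step is the uniqueness argument, namely that the solution is determined up to constants. This is exactly where connectivity enters.

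Second, use the standard representation $R_G(a,b)=(e_a-e_b)^{T}L^{+}(e_a-e_b)=L^{+}_{aa}+L^{+}_{bb}-2L^{+}_{ab}$. Solving for the off-diagonal entry and subtracting gives, for all $u,v,x$,
\[
L^{+}_{ux}-L^{+}_{vx}=\tfrac12\bigl(L^{+}_{uu}-L^{+}_{vv}\bigr)+\tfrac12\bigl(R_G(v,x)-R_G(u,x)\bigr).
\]

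Third, substitute this identity into the expression for $H_{uv}$.

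The degree-weighted part is $\sum_x d(x)(L^{+}_{ux}-L^{+}_{vx})$. Using $\sum_x d(x)=2m$, it equals
\[
m\bigl(L^{+}_{uu}-L^{+}_{vv}\bigr)+\tfrac12\sum_x d(x)\bigl(R_G(v,x)-R_G(u,x)\bigr).
\]

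The second part is $-2m(L^{+}_{uv}-L^{+}_{vv})$. Taking $x=v$ in the identity and using $R_G(v,v)=0$, it equals
\[
-m\bigl(L^{+}_{uu}-L^{+}_{vv}\bigr)+mR_G(u,v).
\]

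Adding the two parts, the diagonal terms cancel and we obtain the stated formula.

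As a sanity check, adding the formula for $H_{uv}$ and $H_{vu}$ cancels the sums and recovers Theorem~\ref{thm:commute}. The main obstacle is the first step: setting up the Laplacian equation correctly, especially the value at $v$, and justifying the pseudoinverse solution. The remaining steps are routine bookkeeping.
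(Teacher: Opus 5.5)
Your proof is correct. The first-step equations do give $L\varphi=\mathbf d-2m\,e_v$, and the right-hand side lies in $\mathbf 1^{\perp}=\operatorname{range}(L)$. Hence $\varphi-L^{+}(\mathbf d-2m\,e_v)\in\ker L=\operatorname{span}(\mathbf 1)$, and the substitution $L^{+}_{ab}=\frac12\bigl(L^{+}_{aa}+L^{+}_{bb}-R_G(a,b)\bigr)$ makes the diagonal terms cancel exactly as you say. You tacitly use $H_{uv}<\infty$, which holds because $G$ is finite and connected.

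The paper does not prove this statement at all. It quotes it from \cite{Tetali1991} and only uses it to obtain $H_{uv}-H_{vu}=R_d(v)-R_d(u)$. The classical argument behind the cited formula is electrical. $H_{uv}$ is the potential at $u$, grounded at $v$, when $d(x)$ units of current enter at each $x$ and $2m$ leave at $v$. By superposition this equals $\sum_x d(x)\phi_x(u)$, where $\phi_x$ is the potential of a unit current from $x$ to $v$, and $\phi_x(u)=\frac12\bigl(R_G(u,v)+R_G(v,x)-R_G(u,x)\bigr)$.

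Your computation is the matrix form of the same idea. $L^{+}(e_x-e_v)$ is that unit-current potential, and your identity for $L^{+}_{ux}-L^{+}_{vx}$ is the three-point formula for $\phi_x(u)$. Your version gives a self-contained derivation from linear algebra plus the standard formula $R_G(a,b)=(e_a-e_b)^{T}L^{+}(e_a-e_b)$. It also shows explicitly where connectivity is used, and it yields Theorem~\ref{thm:commute} and the asymmetry identity as by-products of one computation. The paper's choice buys brevity by treating both preliminary results as black boxes.
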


For each vertex $u\in V$, define the degree-weighted resistance sum by
\[
R_d(u)
=
\sum_{x\in V}
d(x)R_G(u,x).
\]
With this notation, Tetali's formula becomes
\[
H_{uv}
=
mR_G(u,v)
+
\frac12
\left(
R_d(v)-R_d(u)
\right).
\]
Interchanging $u$ and $v$ gives
\[
H_{vu}
=
mR_G(u,v)
+
\frac12
\left(
R_d(u)-R_d(v)
\right).
\]
Subtracting the two expressions, we obtain
\[
H_{uv}-H_{vu}
=
R_d(v)-R_d(u).
\]
This identity will be used throughout the paper.

Palacios \cite{Palacios2025} called a graph
\emph{highly symmetric} if
\[
H_{uv}=H_{vu}
\]
for every pair of vertices $u,v\in V$.
Graphs with symmetric hitting times were further studied by
Chang and Xu \cite{ChangXu2017}.

The preceding identity shows that
\[
H_{uv}=H_{vu}
\quad\Longleftrightarrow\quad
R_d(u)=R_d(v).
\]
Therefore, $G$ is highly symmetric if and only if
$R_d(u)$ is independent of $u\in V$.

Palacios \cite{Palacios2025} showed that
\[
\HT(G)\geq m\Kf(G),
\]
with equality for highly symmetric graphs.
He also noted that walk-regular graphs are highly symmetric.

\section{Decomposition of the hitting time index}
\label{sec:decomposition}

In this section, we decompose the hitting time index
into a Kirchhoff-index term and a term measuring
the asymmetry of hitting times.

For any real numbers $a$ and $b$,
\[
\max\{a,b\}
=
\frac{a+b+|a-b|}{2}.
\]
Applying this identity to each unordered pair of vertices
gives the following decomposition.

\begin{theorem}
\label{thm:HT-decomposition}

Let $G=(V,E)$ be a finite connected simple graph
with $n\geq2$ vertices and $m$ edges. Then
\[
\HT(G)
=
m\Kf(G)
+
\frac12
\sum_{\{u,v\}\subseteq V}
|H_{uv}-H_{vu}|.
\]
Using the degree-weighted resistance sums defined in
Section~\ref{sec:preliminaries}, this can also be written as
\[
\HT(G)
=
m\Kf(G)
+
\frac12
\sum_{\{u,v\}\subseteq V}
|R_d(u)-R_d(v)|.
\]

\end{theorem}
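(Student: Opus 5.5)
The plan is a direct termwise computation, using the identity $\max\{a,b\}=\frac12(a+b+|a-b|)$ recalled just before the statement. For each unordered pair $\{u,v\}$ of distinct vertices, I will apply it with $a=H_{uv}$ and $b=H_{vu}$. This gives
\[
\max\{H_{uv},H_{vu}\}
=
\frac12\left(H_{uv}+H_{vu}\right)
+
\frac12\left|H_{uv}-H_{vu}\right|.
\]
The right-hand side is symmetric in $u$ and $v$, so it is a well-defined function of the unordered pair. Summing over all unordered pairs is therefore legitimate.

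Next I will invoke the commute time identity, Theorem~\ref{thm:commute}, to replace $\frac12(H_{uv}+H_{vu})$ by $mR_G(u,v)$. Summing over $\{u,v\}\subseteq V$, the first part becomes $m\sum_{\{u,v\}}R_G(u,v)=m\Kf(G)$ by the definition of the Kirchhoff index. The second part is exactly $\frac12\sum_{\{u,v\}}|H_{uv}-H_{vu}|$. This proves the first displayed formula.

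For the second formula, I will use the consequence of Tetali's formula (Theorem~\ref{thm:tetali}) derived in Section~\ref{sec:preliminaries}, namely $H_{uv}-H_{vu}=R_d(v)-R_d(u)$. Taking absolute values gives $|H_{uv}-H_{vu}|=|R_d(u)-R_d(v)|$, and substituting termwise yields the second expression.

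There is no genuine obstacle. The only point needing care is that the summands are symmetric in $u$ and $v$, so that summation over unordered pairs is well defined. The absolute values guarantee this, since the orientation chosen for each pair does not matter.
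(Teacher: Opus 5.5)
Your proposal is correct and matches the paper's own proof: both apply $\max\{a,b\}=\frac12(a+b+|a-b|)$ to each pair, replace $H_{uv}+H_{vu}$ by $2mR_G(u,v)$ via the commute time identity, and sum over unordered pairs to get $m\Kf(G)$. Both then pass to the second form using $H_{uv}-H_{vu}=R_d(v)-R_d(u)$ from Tetali's formula.
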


\begin{proof}

For distinct vertices $u,v\in V$,
\[
\max\{H_{uv},H_{vu}\}
=
\frac{
H_{uv}+H_{vu}
+
|H_{uv}-H_{vu}|
}{2}.
\]
By Theorem~\ref{thm:commute},
\[
H_{uv}+H_{vu}
=
2mR_G(u,v),
\]
and hence
\[
\max\{H_{uv},H_{vu}\}
=
mR_G(u,v)
+
\frac12
|H_{uv}-H_{vu}|.
\]
Summing over all unordered pairs $\{u,v\}\subseteq V$ gives
\[
\begin{aligned}
\HT(G)
&=
m
\sum_{\{u,v\}\subseteq V}
R_G(u,v)
+
\frac12
\sum_{\{u,v\}\subseteq V}
|H_{uv}-H_{vu}|\\
&=
m\Kf(G)
+
\frac12
\sum_{\{u,v\}\subseteq V}
|H_{uv}-H_{vu}|.
\end{aligned}
\]
From Section~\ref{sec:preliminaries},
\[
H_{uv}-H_{vu}
=
R_d(v)-R_d(u),
\]
so
\[
|H_{uv}-H_{vu}|
=
|R_d(u)-R_d(v)|.
\]
This gives the second form of the decomposition.

\end{proof}

\begin{definition}
\label{def:asymmetry}

For a finite connected simple graph $G$, define
\[
\mathcal A(G)
=
\frac12
\sum_{\{u,v\}\subseteq V}
|H_{uv}-H_{vu}|.
\]
We call $\mathcal A(G)$ the asymmetry term of $G$.
Equivalently, by Theorem~\ref{thm:HT-decomposition},
\[
\mathcal A(G)
=
\frac12
\sum_{\{u,v\}\subseteq V}
|R_d(u)-R_d(v)|.
\]

\end{definition}


With this notation, the decomposition takes the form
\[
\HT(G)=m\Kf(G)+\mathcal A(G).
\]
Thus, $\mathcal A(G)$ measures the contribution to the hitting time index
coming from the asymmetry between $H_{uv}$ and $H_{vu}$.

\begin{remark}
\label{rem:palacios-bound}

Since $\mathcal A(G)\geq0$, the decomposition recovers the inequality
\[
\HT(G)\geq m\Kf(G)
\]
proved by Palacios \cite{Palacios2025}.

Moreover,
\[
\mathcal A(G)=0
\quad\Longleftrightarrow\quad
H_{uv}=H_{vu}
\qquad (u,v\in V).
\]
Equivalently, by the identity from Section~\ref{sec:preliminaries},
this occurs if and only if $R_d(u)$ is independent of $u\in V$.

Therefore, $\mathcal A(G)=0$ precisely for the highly symmetric graphs
considered by Palacios. In this case, the decomposition reduces to
\[
\HT(G)=m\Kf(G),
\]
in agreement with \cite{Palacios2025}.

\end{remark}


\section{Bounds for the asymmetry term}
\label{sec:bounds}
In this section, we consider bounds for $\mathcal A(G)$ in terms of
the range of the degree-weighted resistance sums. Put
\[
R_d^{\min}=\min_{u\in V}R_d(u),
\qquad
R_d^{\max}=\max_{u\in V}R_d(u).
\]
We first express $\mathcal A(G)$ in terms of consecutive differences
between the ordered values of $R_d$.
\begin{proposition}
\label{prop:asymmetry-gaps}
Arrange the vertices so that
\[
R_d(v_1)\leq R_d(v_2)\leq\cdots\leq R_d(v_n),
\]
and put
\[
\Delta_k
=
R_d(v_{k+1})-R_d(v_k)
\qquad
(1\leq k\leq n-1).
\]
Then
\[
\mathcal A(G)
=
\frac12
\sum_{k=1}^{n-1}
k(n-k)\Delta_k.
\]
\end{proposition}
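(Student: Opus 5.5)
We start from the second expression for $\mathcal A(G)$ in Definition~\ref{def:asymmetry} and relabel the vertices as $v_1,\dots,v_n$ in the stated order. An unordered pair of distinct vertices is then a pair $\{v_i,v_j\}$ with $1\le i<j\le n$. Because the values are sorted, $|R_d(v_i)-R_d(v_j)|=R_d(v_j)-R_d(v_i)$. Ties in the ordering cause no trouble, since they only produce zero gaps. This gives
\[
\mathcal A(G)=\frac12\sum_{1\le i<j\le n}\bigl(R_d(v_j)-R_d(v_i)\bigr).
\]

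Next, write each difference as a telescoping sum of consecutive gaps,
\[
R_d(v_j)-R_d(v_i)=\sum_{k=i}^{j-1}\Delta_k ,
\]
and exchange the order of summation. The gap $\Delta_k$ then appears once for every pair $(i,j)$ with $i\le k<j$. Such pairs are chosen by taking $i\in\{1,\dots,k\}$ and $j\in\{k+1,\dots,n\}$ independently, so there are exactly $k(n-k)$ of them. Therefore
\[
\sum_{1\le i<j\le n}\bigl(R_d(v_j)-R_d(v_i)\bigr)=\sum_{k=1}^{n-1}k(n-k)\Delta_k ,
\]
and multiplying by $\tfrac12$ gives the claimed formula.

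No step here is a real obstacle. The only point needing care is the counting argument for the multiplicity $k(n-k)$, that is, checking that the pairs $i<j$ with $i\le k<j$ correspond exactly to the product of $\{1,\dots,k\}$ and $\{k+1,\dots,n\}$. The remaining steps are routine.
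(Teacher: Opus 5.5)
Your proposal is correct and follows the paper's proof essentially step for step. Like the paper, you use the sorted order to drop the absolute values, telescope each difference into consecutive gaps $\Delta_k$, exchange the order of summation, and count the $k(n-k)$ pairs with $i\le k<j$.
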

\begin{proof}
By Definition~\ref{def:asymmetry},
\[
2\mathcal A(G)
=
\sum_{1\leq i<j\leq n}
|R_d(v_i)-R_d(v_j)|.
\]
Since the vertices are arranged in nondecreasing order,
\[
|R_d(v_i)-R_d(v_j)|
=
R_d(v_j)-R_d(v_i)
=
\sum_{k=i}^{j-1}\Delta_k
\]
for $i<j$. Therefore,
\[
2\mathcal A(G)
=
\sum_{1\leq i<j\leq n}
\sum_{k=i}^{j-1}\Delta_k.
\]
For a fixed $k$, the term $\Delta_k$ appears precisely for those pairs
$(i,j)$ satisfying
\[
i\leq k<j.
\]
There are $k(n-k)$ such pairs. Hence,
\[
2\mathcal A(G)
=
\sum_{k=1}^{n-1}k(n-k)\Delta_k,
\]
which proves the formula.
\end{proof}
Since
\[
\sum_{k=1}^{n-1}\Delta_k
=
R_d^{\max}-R_d^{\min},
\]
Proposition~\ref{prop:asymmetry-gaps} gives the following bounds.
\begin{theorem}
\label{thm:asymmetry-range}
Let $G=(V,E)$ be a finite connected simple graph with $n\geq2$ vertices.
Then
\[
\frac{n-1}{2}
\left(
R_d^{\max}-R_d^{\min}
\right)
\leq
\mathcal A(G)
\leq
\frac12
\left\lfloor\frac{n^2}{4}\right\rfloor
\left(
R_d^{\max}-R_d^{\min}
\right).
\]
For the equality conditions, arrange the vertices so that
\[
R_d(v_1)\leq R_d(v_2)\leq\cdots\leq R_d(v_n).
\]
For $n\geq4$, equality in the lower bound holds if and only if
\[
R_d(v_2)=R_d(v_3)=\cdots=R_d(v_{n-1}).
\]
For $n=2$ and $n=3$, equality in the lower bound always holds.

For the upper bound, the equality conditions are as follows.
\begin{enumerate}
\item If $n=2r$, equality holds if and only if
\[
R_d(v_1)=\cdots=R_d(v_r)
\qquad
\text{and}
\qquad
R_d(v_{r+1})=\cdots=R_d(v_{2r}).
\]
\item If $n=2r+1$, equality holds if and only if
\[
R_d(v_1)=\cdots=R_d(v_r)
\qquad
\text{and}
\qquad
R_d(v_{r+2})=\cdots=R_d(v_{2r+1}).
\]
\end{enumerate}
\end{theorem}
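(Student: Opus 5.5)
The plan is to work directly from Proposition~\ref{prop:asymmetry-gaps}, which writes
\[
2\mathcal A(G)=\sum_{k=1}^{n-1}k(n-k)\Delta_k ,
\]
with every $\Delta_k\ge 0$ and $\sum_k\Delta_k=R_d^{\max}-R_d^{\min}$. Both bounds then come from bounding the coefficients $c_k=k(n-k)$ for $1\le k\le n-1$ from below and from above. This is an elementary weighted-average argument, and the equality cases reduce to deciding which $\Delta_k$ are forced to vanish.

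\textbf{Lower bound.} For $1\le k\le n-1$ we have
\[
c_k-(n-1)=(k-1)(n-1-k)\ge 0 ,
\]
with equality exactly when $k=1$ or $k=n-1$. Multiplying by $\Delta_k\ge0$ and summing gives $2\mathcal A(G)\ge (n-1)(R_d^{\max}-R_d^{\min})$. Equality holds if and only if $(k-1)(n-1-k)\Delta_k=0$ for every $k$, that is, $\Delta_k=0$ for all $2\le k\le n-2$. That is precisely the condition $R_d(v_2)=\cdots=R_d(v_{n-1})$. For $n=2,3$ the range $2\le k\le n-2$ is empty, so equality always holds.

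\textbf{Upper bound.} Since $c_k=k(n-k)$ is a concave parabola in $k$ with its maximum at $n/2$, over the integers we get $c_k\le\lfloor n^2/4\rfloor$. This maximum is attained only at $k=r$ when $n=2r$, and only at $k\in\{r,r+1\}$ when $n=2r+1$, since $r(r+1)=\lfloor (2r+1)^2/4\rfloor$. Summing $c_k\Delta_k\le \lfloor n^2/4\rfloor\Delta_k$ gives the bound. Equality holds if and only if $\Delta_k=0$ at every $k$ where $c_k<\lfloor n^2/4\rfloor$.

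For $n=2r$ this means $\Delta_k=0$ for all $k\ne r$, which is equivalent to $R_d(v_1)=\cdots=R_d(v_r)$ and $R_d(v_{r+1})=\cdots=R_d(v_{2r})$. For $n=2r+1$ it means $\Delta_k=0$ for $k\notin\{r,r+1\}$, which is equivalent to $R_d(v_1)=\cdots=R_d(v_r)$ and $R_d(v_{r+2})=\cdots=R_d(v_{2r+1})$. In the odd case $v_{r+1}$ is left free.

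\textbf{Main obstacle.} The only real care needed is the strictness of the coefficient inequalities, $c_k<\lfloor n^2/4\rfloor$ away from the middle index or indices. This follows from
\[
c_k-c_{k+1}=2k+1-n ,
\]
so $c_k$ is strictly increasing up to the middle and strictly decreasing after it. The degenerate case where all $R_d$ values coincide also needs checking: both sides are $0$ there, and the stated conditions hold trivially, so it is consistent.
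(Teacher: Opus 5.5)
Your proposal is correct and follows essentially the same route as the paper: both use Proposition~\ref{prop:asymmetry-gaps} and bound the coefficients $k(n-k)$ below by $n-1$ via $(k-1)(n-k-1)\ge 0$ and above by $\lfloor n^2/4\rfloor$, then read off the equality cases from which $\Delta_k$ must vanish. Your identity $c_k-c_{k+1}=2k+1-n$ makes explicit the strictness of $k(n-k)<\lfloor n^2/4\rfloor$ away from the middle index or indices, which the paper only asserts.
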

\begin{proof}
By Proposition~\ref{prop:asymmetry-gaps},
\[
2\mathcal A(G)
=
\sum_{k=1}^{n-1}k(n-k)\Delta_k.
\]
For $1\leq k\leq n-1$,
\[
k(n-k)-(n-1)
=
(k-1)(n-k-1)
\geq0.
\]
Hence,
\[
k(n-k)\geq n-1,
\]
and therefore
\[
\begin{aligned}
2\mathcal A(G)
&\geq
(n-1)\sum_{k=1}^{n-1}\Delta_k\\
&=
(n-1)
\left(
R_d^{\max}-R_d^{\min}
\right).
\end{aligned}
\]
This proves the lower bound.

Suppose first that $n\geq4$. Since
\[
k(n-k)>n-1
\]
precisely for
\[
2\leq k\leq n-2,
\]
equality in the lower bound holds if and only if
\[
\Delta_k=0
\qquad
(2\leq k\leq n-2).
\]
Equivalently,
\[
R_d(v_2)=\cdots=R_d(v_{n-1}).
\]
If $n=2$ or $n=3$, all coefficients satisfy
\[
k(n-k)=n-1,
\]
so equality in the lower bound always holds.

For the upper bound, we have
\[
k(n-k)
\leq
\left\lfloor\frac{n^2}{4}\right\rfloor
\]
for every $1\leq k\leq n-1$. Thus,
\[
2\mathcal A(G)
\leq
\left\lfloor\frac{n^2}{4}\right\rfloor
\sum_{k=1}^{n-1}\Delta_k,
\]
and therefore
\[
\mathcal A(G)
\leq
\frac12
\left\lfloor\frac{n^2}{4}\right\rfloor
\left(
R_d^{\max}-R_d^{\min}
\right).
\]

If $n=2r$, the maximum of $k(n-k)$ is attained only at $k=r$.
Hence equality holds if and only if
\[
\Delta_k=0
\qquad
(k\neq r),
\]
which is equivalent to
\[
R_d(v_1)=\cdots=R_d(v_r),
\qquad
R_d(v_{r+1})=\cdots=R_d(v_{2r}).
\]

If $n=2r+1$, the maximum is attained precisely at
$k=r$ and $k=r+1$.
Hence equality holds if and only if
\[
\Delta_k=0
\qquad
(k\neq r,r+1),
\]
which is equivalent to
\[
R_d(v_1)=\cdots=R_d(v_r),
\qquad
R_d(v_{r+2})=\cdots=R_d(v_{2r+1}).
\]
\end{proof}
Combining the lower bound in Theorem~\ref{thm:asymmetry-range}
with Theorem~\ref{thm:HT-decomposition} gives the following consequence.
\begin{corollary}
\label{cor:HT-range-bound}
Let $G=(V,E)$ be a finite connected simple graph
with $n\geq2$ vertices and $m$ edges. Then
\[
\HT(G)
\geq
m\Kf(G)
+
\frac{n-1}{2}
\left(
R_d^{\max}-R_d^{\min}
\right).
\]
For the equality conditions, arrange the vertices so that
\[
R_d(v_1)\leq R_d(v_2)\leq\cdots\leq R_d(v_n).
\]
For $n\geq4$, equality holds if and only if
\[
R_d(v_2)=\cdots=R_d(v_{n-1}).
\]
For $n=2$ and $n=3$, equality always holds.
\end{corollary}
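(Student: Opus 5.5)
The corollary follows by substituting the lower bound of Theorem~\ref{thm:asymmetry-range} into the decomposition of Theorem~\ref{thm:HT-decomposition}. In the notation of Definition~\ref{def:asymmetry}, that decomposition reads
\[
\HT(G)=m\Kf(G)+\mathcal A(G).
\]
The term $m\Kf(G)$ is a fixed quantity attached to $G$. Any lower bound on $\mathcal A(G)$ therefore transfers verbatim to $\HT(G)$ after adding $m\Kf(G)$ to both sides.

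The argument has three steps.
\begin{enumerate}
\item Invoke Theorem~\ref{thm:HT-decomposition} to write $\HT(G)=m\Kf(G)+\mathcal A(G)$.
\item Apply the lower bound of Theorem~\ref{thm:asymmetry-range},
\[
\mathcal A(G)\geq \frac{n-1}{2}\bigl(R_d^{\max}-R_d^{\min}\bigr).
\]
This yields the stated inequality.
\item Handle the equality condition. Because the decomposition is an exact identity, equality in the corollary holds if and only if
\[
\mathcal A(G)=\frac{n-1}{2}\bigl(R_d^{\max}-R_d^{\min}\bigr).
\]
The equality characterization of Theorem~\ref{thm:asymmetry-range} then transfers directly. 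For $n\geq4$, equality holds precisely when $R_d(v_2)=\cdots=R_d(v_{n-1})$ under the nondecreasing ordering. For $n=2,3$, it holds always, since every coefficient $k(n-k)$ in Proposition~\ref{prop:asymmetry-gaps} equals $n-1$.
\end{enumerate}

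There is no genuine obstacle. The only point to state explicitly is that the equivalence of equality conditions relies on the decomposition being an identity rather than an inequality.
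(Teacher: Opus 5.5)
Your proposal is correct and follows the paper's proof: it substitutes the lower bound of Theorem~\ref{thm:asymmetry-range} into the identity $\HT(G)=m\Kf(G)+\mathcal A(G)$ from Theorem~\ref{thm:HT-decomposition}. Because that decomposition is an exact identity, the equality characterization transfers unchanged.
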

\begin{proof}
By Theorem~\ref{thm:HT-decomposition},
\[
\HT(G)
=
m\Kf(G)+\mathcal A(G).
\]
The result follows directly from the lower bound
and its equality condition in Theorem~\ref{thm:asymmetry-range}.
\end{proof}

\section{Applications to trees}
\label{sec:trees}
In this section, we apply the decomposition to trees.
Let $T$ be a tree with $n\geq2$ vertices.
Since
\[
|E(T)|=n-1
\]
and the effective resistance between two vertices of a tree
is equal to their graph distance, we have
\[
\Kf(T)=W(T).
\]
Thus, Theorem~\ref{thm:HT-decomposition} gives
\[
\HT(T)=(n-1)W(T)+\mathcal A(T).
\]
We first record a relation between the degree-weighted resistance sums
at adjacent vertices.

\begin{lemma}
\label{lem:tree-edge-Rd}
Let $u$ and $v$ be adjacent vertices of a tree $T$ with $n$ vertices.
Removing the edge $uv$ separates $T$ into two components
$T_u$ and $T_v$ containing $u$ and $v$, respectively.
If
\[
|V(T_u)|=s,
\qquad
|V(T_v)|=n-s,
\]
then
\[
R_d(u)-R_d(v)=2(n-2s).
\]
\end{lemma}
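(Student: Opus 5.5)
In a tree the effective resistance equals graph distance, so
\[
R_d(u)-R_d(v)=\sum_{x\in V}d(x)\bigl(d_T(u,x)-d_T(v,x)\bigr).
\]
The plan is to evaluate the bracket vertex by vertex, using the edge decomposition. Every path from a vertex of $T_u$ to $v$ passes through $u$ and then the edge $uv$. Hence, for $x\in V(T_u)$,
\[
d_T(v,x)=d_T(u,x)+1,
\]
and symmetrically, for $x\in V(T_v)$,
\[
d_T(u,x)=d_T(v,x)+1.
\]
The bracket therefore equals $-1$ on $V(T_u)$ and $+1$ on $V(T_v)$, which gives
\[
R_d(u)-R_d(v)=\sum_{x\in V(T_v)}d(x)-\sum_{x\in V(T_u)}d(x).
\]

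The next step is to compute the two degree sums, with degrees taken in $T$. The subtree $T_u$ has $s$ vertices and $s-1$ edges. Every vertex of $T_u$ except $u$ keeps its degree from $T_u$, while $u$ gains one from the edge $uv$. Hence
\[
\sum_{x\in V(T_u)}d(x)=2(s-1)+1=2s-1.
\]
In the same way,
\[
\sum_{x\in V(T_v)}d(x)=2(n-s)-1.
\]
Subtracting yields
\[
R_d(u)-R_d(v)=(2n-2s-1)-(2s-1)=2(n-2s),
\]
as claimed.

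There is no real obstacle here. The only points needing care are the sign convention, so that the result is $2(n-2s)$ rather than its negative, and remembering that the degree sums use degrees in $T$, not in the components. As a cross-check, Section~\ref{sec:preliminaries} gives $H_{vu}-H_{uv}=R_d(u)-R_d(v)$. For adjacent vertices in a tree the standard values are $H_{uv}=2s-1$ and $H_{vu}=2(n-s)-1$, whose difference $2(n-2s)$ agrees with the result.
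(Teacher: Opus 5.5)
Your proof is correct and follows the paper's argument essentially step for step: the bracket $d_T(u,x)-d_T(v,x)$ equals $-1$ on $V(T_u)$ and $+1$ on $V(T_v)$, and the degree sums taken in $T$ are $2s-1$ and $2(n-s)-1$. Your cross-check via $H_{uv}=2s-1$ and $H_{vu}=2(n-s)-1$ does not appear in the paper and is not needed, but it is a valid consistency check.
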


\begin{proof}
Since effective resistance is equal to graph distance in a tree,
\[
R_d(u)-R_d(v)
=
\sum_{x\in V(T)}
d(x)
\left(
d_T(u,x)-d_T(v,x)
\right).
\]
For $x\in V(T_u)$,
\[
d_T(u,x)-d_T(v,x)=-1,
\]
whereas for $x\in V(T_v)$,
\[
d_T(u,x)-d_T(v,x)=1.
\]
Hence
\[
R_d(u)-R_d(v)
=
-\sum_{x\in V(T_u)}d(x)
+
\sum_{x\in V(T_v)}d(x).
\]
Here the degrees are taken in the original tree $T$.
Since exactly one edge of $T$ joins $T_u$ and $T_v$,
\[
\sum_{x\in V(T_u)}d(x)
=
2|E(T_u)|+1
=
2(s-1)+1
=
2s-1,
\]
and similarly,
\[
\sum_{x\in V(T_v)}d(x)
=
2(n-s-1)+1
=
2(n-s)-1.
\]
Therefore,
\[
\begin{aligned}
R_d(u)-R_d(v)
&=
-(2s-1)+\bigl(2(n-s)-1\bigr)\\
&=
2(n-2s).
\end{aligned}
\]
\end{proof}

In particular, if $u$ is a leaf and $v$ is its unique neighbor,
then $s=1$, and hence
\[
R_d(u)-R_d(v)=2(n-2).
\]
Combining this observation with the lower bound in
Theorem~\ref{thm:asymmetry-range} gives the following result.

\begin{theorem}
\label{thm:tree-lower-bound}
Let $T$ be a tree with $n\geq3$ vertices. Then
\[
\mathcal A(T)\geq(n-1)(n-2)
\]
and
\[
\HT(T)
\geq
(n-1)W(T)+(n-1)(n-2).
\]
Equality in either inequality holds if and only if
\[
T\cong K_{1,n-1}.
\]
\end{theorem}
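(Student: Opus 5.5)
The plan is to combine the lower bound of Theorem~\ref{thm:asymmetry-range} with a lower bound on the range $R_d^{\max}-R_d^{\min}$ obtained from Lemma~\ref{lem:tree-edge-Rd}. Since $n\geq3$, $T$ has a leaf $u$ whose neighbor $v$ is not a leaf. The remark after Lemma~\ref{lem:tree-edge-Rd} gives $R_d(u)-R_d(v)=2(n-2)$, so
\[
R_d^{\max}-R_d^{\min}\geq 2(n-2).
\]
Theorem~\ref{thm:asymmetry-range} then yields
\[
\mathcal A(T)\geq\frac{n-1}{2}\cdot 2(n-2)=(n-1)(n-2).
\]
Adding $(n-1)W(T)$, via $\HT(T)=(n-1)W(T)+\mathcal A(T)$, gives the second inequality. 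The two inequalities are therefore equivalent, and so are their equality cases.

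For the equality case, first verify the star directly. In $K_{1,n-1}$ every leaf $\ell$ and the center $c$ satisfy $R_d(\ell)-R_d(c)=2(n-2)$, and all leaves share the same value of $R_d$. Hence the ordering has $R_d(v_1)=R_d(c)$ and $R_d(v_2)=\cdots=R_d(v_n)$. The equality condition of Theorem~\ref{thm:asymmetry-range} then holds (for $n=3$ equality holds automatically), and the range is exactly $2(n-2)$. Alternatively, Proposition~\ref{prop:asymmetry-gaps} gives $\mathcal A=\tfrac12(n-1)\cdot 2(n-2)$ directly.

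For the converse, assume $\mathcal A(T)=(n-1)(n-2)$. Both inequalities in the chain must then be tight. First, $R_d^{\max}-R_d^{\min}=2(n-2)$ exactly. Second, for $n\geq4$, $R_d(v_2)=\cdots=R_d(v_{n-1})$. I expect this converse to be the main obstacle, because the case $n=3$ is trivial (the only tree is the star) while the general case needs a structural argument.

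For $n\geq4$, suppose $T$ is not a star. Then $T$ has diameter at least $3$, so take a path $a\,b\,c\,e$ where $a$ is a leaf. The tightness conditions should fail along this path, and I would check this with Lemma~\ref{lem:tree-edge-Rd}:
\begin{itemize}
\item On the edge $ab$ (with $s=1$), $R_d(a)=R_d(b)+2(n-2)$.
\item On the edge $bc$, the component of $b$ has size $s\geq2$, and its complement also has size at least $2$. The lemma gives $R_d(b)-R_d(c)=2(n-2s)$.
\end{itemize}
Two cases then finish the argument.
\begin{itemize}
\item If $s<n/2$, then $R_d(c)<R_d(b)$, so $R_d(a)-R_d(c)>2(n-2)$. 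This contradicts the exact range.
\item If $s\geq n/2$, use a leaf on the other side instead. Let $a'$ be a leaf in the component of $c$ reached through $e$, with neighbor $b'$. The same computation applies symmetrically, and the range again exceeds $2(n-2)$ unless both sides have size exactly $n/2$.
\end{itemize}
In that balanced case $R_d(b)=R_d(c)$. The values $R_d(a)$ and $R_d(a')$ exceed this common value by $2(n-2)$, and $a,a'$ are distinct leaves. The vertices $b$ and $c$ are also distinct and lie strictly below. Hence at least two vertices attain the minimum and at least two attain the maximum, which is a configuration I still need to rule out.

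A cleaner route is to argue generally that a non-star tree always has two distinct vertices $x,y$ with $R_d(x)-R_d(y)>2(n-2)$, or else violates the middle-equality condition. I would try first the observation that $R_d$ is a strictly convex-type function along paths: by the lemma, the increments $2(n-2s)$ change monotonically as $s$ grows along a path. Using this, I would show that the maximum difference is attained between two leaves and a central vertex. With a diameter of at least $3$, this forces the range beyond $2(n-2)$, or at least three distinct values among $v_2,\dots,v_{n-1}$.
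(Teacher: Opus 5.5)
Your lower bound, the check for the star, and the subcase $s<n/2$ of the converse are correct. The converse is not finished, though: you explicitly leave the balanced case $s=n/2$ open, and your closing paragraph about a ``convex-type'' behavior of $R_d$ is a plan rather than an argument. The subcase $s>n/2$ is fine once written out. Take a leaf $a'$ on the side of $c$, its neighbor $b'$, and the next vertex $x$ on the path from $b'$ to $b$. The side of $b'$ in the edge $b'x$ has fewer than $n/2$ vertices, so $R_d(x)<R_d(b')$ and $R_d(a')-R_d(x)>2(n-2)$. The balanced case is a genuine obstruction for your method, because the range condition alone cannot exclude it. For $P_4$ the values of $R_d$ are $9,5,5,9$, so $R_d^{\max}-R_d^{\min}=4=2(n-2)$, although $P_4$ is not a star.

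The missing ingredient is the second tightness condition $R_d(v_2)=\cdots=R_d(v_{n-1})$, which you derived but never used. For every leaf $\ell$ with neighbor $m$, the identity $R_d(\ell)-R_d(m)=2(n-2)=R_d^{\max}-R_d^{\min}$ forces $R_d(\ell)=R_d^{\max}$ and $R_d(m)=R_d^{\min}$. So all leaves, of which there are at least two, attain $R_d^{\max}$. In the balanced case you also have $R_d(c)=R_d(b)=R_d^{\min}$ with $b\neq c$. Hence $R_d(v_2)=R_d^{\min}$ and $R_d(v_{n-1})=R_d^{\max}$, and the middle-equality condition forces $R_d^{\max}=R_d^{\min}$, contradicting $2(n-2)>0$. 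With this line your argument closes.

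The same observation proves the whole converse without your path case analysis, and this is how the paper proceeds. Two leaves at $R_d^{\max}$ force $R_d^{\max}=\rho$, so exactly one vertex $c$ attains $R_d^{\min}$. Every leaf's neighbor attains $R_d^{\min}$, so every leaf is adjacent to $c$. Since a vertex farthest from $c$ is a leaf, this gives $T\cong K_{1,n-1}$.
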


\begin{proof}
Let $u$ be a leaf of $T$ and let $v$ be its unique neighbor.
By Lemma~\ref{lem:tree-edge-Rd},
\[
R_d(u)-R_d(v)=2(n-2).
\]
Therefore,
\[
R_d^{\max}-R_d^{\min}
\geq
2(n-2).
\]
By Theorem~\ref{thm:asymmetry-range},
\[
\begin{aligned}
\mathcal A(T)
&\geq
\frac{n-1}{2}
\left(
R_d^{\max}-R_d^{\min}
\right)\\
&\geq
\frac{n-1}{2}\,2(n-2)\\
&=
(n-1)(n-2).
\end{aligned}
\]
Since
\[
\HT(T)=(n-1)W(T)+\mathcal A(T),
\]
we also obtain
\[
\HT(T)
\geq
(n-1)W(T)+(n-1)(n-2).
\]

We next consider the equality case.
First suppose that
\[
T\cong K_{1,n-1}.
\]
Let $c$ be the center and let $l$ be any leaf.
By Lemma~\ref{lem:tree-edge-Rd},
\[
R_d(l)-R_d(c)=2(n-2).
\]
By symmetry, all leaves have the same value of $R_d$.
Thus, only center--leaf pairs contribute to $\mathcal A(T)$, and hence
\[
\mathcal A(T)
=
\frac12(n-1)\,2(n-2)
=
(n-1)(n-2).
\]
Therefore, equality holds.

Conversely, suppose that
\[
\mathcal A(T)=(n-1)(n-2).
\]
Since
\[
\mathcal A(T)
\geq
\frac{n-1}{2}
\left(
R_d^{\max}-R_d^{\min}
\right)
\geq
(n-1)(n-2),
\]
equality must hold in both inequalities. Hence
\[
R_d^{\max}-R_d^{\min}
=
2(n-2).
\]
If $n=3$, then
\[
T\cong P_3=K_{1,2},
\]
so the statement follows.

Now suppose that $n\geq4$ and arrange the vertices so that
\[
R_d(v_1)\leq R_d(v_2)\leq\cdots\leq R_d(v_n).
\]
By the equality condition in Theorem~\ref{thm:asymmetry-range},
\[
R_d(v_2)=\cdots=R_d(v_{n-1}).
\]
Put
\[
\rho
=
R_d(v_2)
=
\cdots
=
R_d(v_{n-1}).
\]
Thus, at most one vertex has $R_d$-value smaller than $\rho$,
and at most one vertex has $R_d$-value larger than $\rho$.

Let $u$ be any leaf and let $v$ be its unique neighbor.
By Lemma~\ref{lem:tree-edge-Rd},
\[
R_d(u)-R_d(v)
=
2(n-2)
=
R_d^{\max}-R_d^{\min}.
\]
Since the difference \(R_d(u)-R_d(v)\) is equal to the full range
\(R_d^{\max}-R_d^{\min}\), it follows that
\[
R_d(u)=R_d^{\max},
\qquad
R_d(v)=R_d^{\min}.
\]
Every tree has at least two leaves, and, as shown above, every leaf has
\(R_d\)-value \(R_d^{\max}\). Hence at least two vertices attain
\(R_d^{\max}\). Since at most one vertex can have an \(R_d\)-value
strictly larger than \(\rho\), it follows that
\[
R_d^{\max}=\rho.
\]
Consequently, $R_d^{\min}<\rho$, and there is exactly one vertex $c$ satisfying
\[
R_d(c)=R_d^{\min}.
\]
Since the neighbor of every leaf has degree-weighted resistance sum
$R_d^{\min}$, every leaf is adjacent to $c$.

If some vertex had distance at least two from $c$,
choose a vertex $w$ at maximum distance from $c$.
Since $T$ is a tree, $w$ must be a leaf.
Moreover,
\[
d_T(c,w)\geq2,
\]
so $w$ is not adjacent to $c$,
contradicting the fact that every leaf is adjacent to $c$.
Hence every vertex other than $c$ is adjacent to $c$, and therefore
\[
T\cong K_{1,n-1}.
\]

Finally, since
\[
\HT(T)=(n-1)W(T)+\mathcal A(T),
\]
equality in
\[
\HT(T)
\geq
(n-1)W(T)+(n-1)(n-2)
\]
is equivalent to
\[
\mathcal A(T)=(n-1)(n-2).
\]
Therefore, equality in either inequality holds if and only if
\[
T\cong K_{1,n-1}.
\]
\end{proof}

We next give an upper bound for the asymmetry term of a tree.

\begin{theorem}
\label{thm:tree-upper-bound}

Let $T$ be a tree with $n\geq3$ vertices. Then
\[
\mathcal A(T)\leq(n-2)W(T).
\]
Consequently,
\[
\HT(T)\leq(2n-3)W(T).
\]

\end{theorem}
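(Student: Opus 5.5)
We bound each pairwise difference $|R_d(u)-R_d(v)|$ by a multiple of the distance $d_T(u,v)$ and then sum over all pairs.

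\textbf{Step 1: a Lipschitz bound along edges.} Let $u$ and $v$ be adjacent and let $s$ be as in Lemma~\ref{lem:tree-edge-Rd}. Since $1\leq s\leq n-1$, we have
\[
|R_d(u)-R_d(v)|=2|n-2s|\leq 2(n-2).
\]
Indeed, $|n-2s|$ is maximized at the endpoints $s=1$ and $s=n-1$, where it equals $n-2$.

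\textbf{Step 2: extension to arbitrary pairs.} For arbitrary vertices $u,v$, take the unique path $u=x_0,x_1,\dots,x_\ell=v$ with $\ell=d_T(u,v)$. By the triangle inequality and Step~1,
\[
|R_d(u)-R_d(v)|\leq\sum_{i=1}^{\ell}|R_d(x_{i-1})-R_d(x_i)|\leq 2(n-2)\,d_T(u,v).
\]

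\textbf{Step 3: summation.} Summing over unordered pairs in Definition~\ref{def:asymmetry} gives
\[
\mathcal A(T)\leq\frac12\sum_{\{u,v\}}2(n-2)\,d_T(u,v)=(n-2)W(T).
\]
Since $\HT(T)=(n-1)W(T)+\mathcal A(T)$, it follows that $\HT(T)\leq(2n-3)W(T)$.

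The only point requiring care is Step~1, namely checking that $|n-2s|\leq n-2$ for $1\leq s\leq n-1$. This is immediate because $n-2s$ runs between $-(n-2)$ and $n-2$ on that range. I therefore expect no real obstacle: the argument is a Lipschitz estimate for $R_d$ with respect to graph distance, combined with the identity $\Kf(T)=W(T)$.
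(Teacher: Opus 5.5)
Your proposal is correct and follows essentially the same route as the paper's proof. The paper applies Lemma~\ref{lem:tree-edge-Rd} to each edge of the $u$--$v$ path and bounds $|n-2s_i|\leq n-2$ to get $|R_d(u)-R_d(v)|\leq 2(n-2)\,d_T(u,v)$. It then sums over all pairs and uses $\HT(T)=(n-1)W(T)+\mathcal A(T)$.
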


\begin{proof}
Let $u,v\in V(T)$, and let
\[
u=v_0,v_1,\ldots,v_k=v
\]
be the unique path from $u$ to $v$, where
\[
k=d_T(u,v).
\]
For each $1\leq i\leq k$, removing the edge
$v_{i-1}v_i$ separates $T$ into two components.
Let $s_i$ be the number of vertices in the component
containing $v_{i-1}$.

By Lemma~\ref{lem:tree-edge-Rd},
\[
R_d(v_{i-1})-R_d(v_i)
=
2(n-2s_i).
\]
Therefore,
\[
R_d(u)-R_d(v)
=
\sum_{i=1}^{k}2(n-2s_i).
\]
Since $1\leq s_i\leq n-1$, we have
\[
|n-2s_i|\leq n-2.
\]
Hence,
\[
\begin{aligned}
|R_d(u)-R_d(v)|
&\leq
\sum_{i=1}^{k}2|n-2s_i|\\
&\leq
2(n-2)k\\
&=
2(n-2)d_T(u,v).
\end{aligned}
\]
Summing over all unordered pairs of vertices gives
\[
\begin{aligned}
\mathcal A(T)
&=
\frac12
\sum_{\{u,v\}\subseteq V(T)}
|R_d(u)-R_d(v)|\\
&\leq
(n-2)
\sum_{\{u,v\}\subseteq V(T)}
d_T(u,v)\\
&=
(n-2)W(T).
\end{aligned}
\]
Finally, since
\[
\HT(T)=(n-1)W(T)+\mathcal A(T),
\]
we obtain
\[
\HT(T)
\leq
(n-1)W(T)+(n-2)W(T)
=
(2n-3)W(T).
\]

\end{proof}

\begin{remark}
\label{rem:tree-upper-comparison}

The general inequality
\[
\HT(G)\leq2m\Kf(G)
\]
was obtained by Palacios \cite{Palacios2025}.
Together with the decomposition
\[
\HT(G)=m\Kf(G)+\mathcal A(G),
\]
it gives
\[
\mathcal A(G)\leq m\Kf(G).
\]
For a tree $T$, this yields
\[
\mathcal A(T)\leq(n-1)W(T).
\]
Thus, Theorem~\ref{thm:tree-upper-bound} gives the sharper estimate
\[
\mathcal A(T)\leq(n-2)W(T).
\]
\end{remark}

\begin{remark}
\label{rem:star-minimizer}
It is well known that, among all trees on \(n\) vertices,
\[
W(T)\geq(n-1)^2,
\]
with equality if and only if
\[
T\cong S_n=K_{1,n-1}
\]
\cite{DobryninEntringerGutman2001}.
Therefore, by Theorem~\ref{thm:tree-lower-bound},
every tree \(T\) on \(n\geq3\) vertices satisfies
\[
\begin{aligned}
\HT(T)
&\geq
(n-1)W(T)+(n-1)(n-2)\\
&\geq
(n-1)(n-1)^2+(n-1)(n-2)\\
&=
(n-1)(n^2-n-1).
\end{aligned}
\]
On the other hand,
\[
\HT(S_n)=(n-1)(n^2-n-1),
\]
as given in \cite{Palacios2025}.
Hence, among all trees on $n$ vertices,
the hitting time index is minimized uniquely by the star $S_n$.

It is worth noting that the star also occurs as an extremal tree
for a number of vertex-degree-based topological indices;
see, for example, Cruz and Rada \cite{Rada2019}.
\end{remark}

\section{Examples}
\label{sec:examples}
In this section, we apply the decomposition to several graph families.
We first consider complete bipartite graphs and path graphs,
and then graphs obtained by conjoining complete graphs.

\subsection{Complete bipartite graphs}
Let $p,q\geq1$, and let $K_{p,q}$ have partite sets $X$ and $Y$ with
\[
|X|=p,
\qquad
|Y|=q.
\]
Then
\[
d(x)=q
\quad (x\in X),
\qquad
\text{and}
\qquad
d(y)=p
\qquad (y\in Y).
\]
The effective resistances are
\[
R_G(x,x')=\frac{2}{q}
\quad
(x,x'\in X,\ x\neq x'),
\qquad
R_G(y,y')=\frac{2}{p}
\quad
(y,y'\in Y,\ y\neq y'),
\]
and
\[
R_G(x,y)
=
\frac{p+q-1}{pq}
\qquad
(x\in X,\ y\in Y).
\]
These formulas give the following values of the degree-weighted resistance sums.

\begin{proposition}
\label{prop:complete-bipartite}

For $G=K_{p,q}$,
\[
R_d(x)=3p+q-3
\quad
(x\in X)
\qquad
\text{and}
\qquad
R_d(y)=p+3q-3
\quad
(y\in Y).
\]
Consequently,
\[
\mathcal A(K_{p,q})=pq|p-q|.
\]

\end{proposition}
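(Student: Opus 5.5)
The plan is to compute $R_d$ directly from its definition $R_d(u)=\sum_{x\in V}d(x)R_G(u,x)$, using the stated degrees and effective resistances of $K_{p,q}$, and then to evaluate $\mathcal A$ via Definition~\ref{def:asymmetry}. By symmetry, $R_d$ is constant on $X$ and constant on $Y$, so it suffices to treat one representative vertex of each part.

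For $x\in X$, I split the sum into the other $p-1$ vertices of $X$ and the $q$ vertices of $Y$. The first group contributes $(p-1)\cdot q\cdot \tfrac{2}{q}=2(p-1)$. The second group contributes $q\cdot p\cdot \tfrac{p+q-1}{pq}=p+q-1$. The total is $R_d(x)=3p+q-3$. By the symmetric computation with the roles of $p$ and $q$ interchanged, $R_d(y)=2(q-1)+(p+q-1)=p+3q-3$. The only point needing care is using the correct degree for each part: $d=q$ on $X$ and $d=p$ on $Y$. These are exactly what cancel the denominators $q$, $p$ and $pq$ in the resistances.

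For the asymmetry term, the pairs within $X$ and the pairs within $Y$ contribute zero, since $R_d$ is constant on each part. Each of the $pq$ cross pairs contributes $|R_d(x)-R_d(y)|=|2p-2q|=2|p-q|$. Hence
\[
\mathcal A(K_{p,q})=\tfrac12\cdot pq\cdot 2|p-q|=pq|p-q|.
\]
There is no real obstacle here. The only thing to check is the degenerate case $p=1$ or $q=1$, where one part has a single vertex and the "same part" resistance never occurs. The formulas remain valid because the count $p-1$ (or $q-1$) of such pairs is then zero.
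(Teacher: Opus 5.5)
Your proposal is correct and follows the paper's proof essentially verbatim. You split $R_d(x)$ into the same-part contribution $2(p-1)$ and the cross contribution $p+q-1$ (and symmetrically for $Y$), then use that only the $pq$ cross pairs contribute, each giving $2|p-q|$ to the sum defining $\mathcal A$. Your remark on the degenerate cases $p=1$ or $q=1$ is a small check the paper leaves implicit.
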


\begin{proof}

For $x\in X$,
\[
\begin{aligned}
R_d(x)
&=
\sum_{\substack{x'\in X\\x'\neq x}}
d(x')R_G(x,x')
+
\sum_{y\in Y}
d(y)R_G(x,y)\\
&=
(p-1)q\frac{2}{q}
+
qp\frac{p+q-1}{pq}\\
&=
3p+q-3.
\end{aligned}
\]
Similarly,
\[
\begin{aligned}
R_d(y)
&=
\sum_{x\in X}
d(x)R_G(y,x)
+
\sum_{\substack{y'\in Y\\y'\neq y}}
d(y')R_G(y,y')\\
&=
pq\frac{p+q-1}{pq}
+
(q-1)p\frac{2}{p}\\
&=
p+3q-3.
\end{aligned}
\]
Thus, vertices in the same partite set have the same value of $R_d$,
while for $x\in X$ and $y\in Y$,
\[
|R_d(x)-R_d(y)|
=
2|p-q|.
\]
There are $pq$ such pairs, and hence
\[
\mathcal A(K_{p,q})
=
\frac12 pq\cdot2|p-q|
=
pq|p-q|.
\]

\end{proof}


In particular,
\[
\mathcal A(K_{p,q})=0
\quad\Longleftrightarrow\quad
p=q.
\]
The Kirchhoff index is
\[
\Kf(K_{p,q})
=
\frac{p(p-1)}{q}
+
\frac{q(q-1)}{p}
+
p+q-1.
\]
Since $|E(K_{p,q})|=pq$, the decomposition gives
\[
\begin{aligned}
\HT(K_{p,q})
&=
pq\,\Kf(K_{p,q})
+
pq|p-q|\\
&=
p^2(p-1)
+
q^2(q-1)
+
pq(p+q-1)
+
pq|p-q|.
\end{aligned}
\]
This agrees with the formula for $K_{p,q}$ given in
\cite{PalaciosSantamariaGraphs2025}.

\subsection{Path graphs}

We next consider the path graph $P_n$ with vertices
$1,2,\ldots,n$ in their natural order.
Since $P_n$ is a tree,
\[
R_{P_n}(i,j)=|i-j|.
\]
Also,
\[
d(1)=d(n)=1,
\qquad
d(j)=2
\quad
(2\leq j\leq n-1).
\]

\begin{lemma}
\label{lem:path-Rd}

For $1\leq i\leq n$,
\[
R_d(i)
=
2i^2-2(n+1)i+n^2+1.
\]
In particular,
\[
R_d(i)=R_d(n+1-i).
\]

\end{lemma}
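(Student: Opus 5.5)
Two routes are available: compute $R_d(i)$ directly from the definition, or use Lemma~\ref{lem:tree-edge-Rd} as a recursion along the path. I would take the recursion as the main argument and keep the direct computation as a check.

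\emph{Recursion.} Removing the edge $\{i,i+1\}$ leaves the component of $i$ with $s=i$ vertices. Lemma~\ref{lem:tree-edge-Rd} then gives
\[
R_d(i)-R_d(i+1)=2(n-2i),
\qquad\text{that is,}\qquad
R_d(i+1)=R_d(i)-2n+4i .
\]
The candidate $f(i)=2i^2-2(n+1)i+n^2+1$ satisfies
\[
f(i+1)-f(i)=2(2i+1)-2(n+1)=4i-2n,
\]
which is the same increment. So it suffices to check the base value $R_d(1)=f(1)$, and then induct on $i$.

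\emph{Base value.} Using $R_{P_n}(1,j)=j-1$ and the degrees of $P_n$, for $n\ge 2$ we get
\[
R_d(1)=\sum_{j=2}^{n-1}2(j-1)+(n-1)=(n-2)(n-1)+(n-1)=(n-1)^2 .
\]
On the other side, $f(1)=2-2n-2+n^2+1=n^2-2n+1=(n-1)^2$, so the base case holds. The degenerate case $n=2$ is covered, since the middle sum is then empty and $R_d(1)=1=f(1)$.

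\emph{Symmetry.} Substituting $n+1-i$ into $f$ gives
\[
2(n+1-i)^2-2(n+1)(n+1-i)=2i^2-2(n+1)i ,
\]
so $f(n+1-i)=f(i)$. Alternatively, this follows from the automorphism $i\mapsto n+1-i$ of $P_n$, which preserves degrees and resistances.

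No real obstacle arises. The only care needed is the endpoint degrees $d(1)=d(n)=1$ in the base computation, and getting the sign convention of Lemma~\ref{lem:tree-edge-Rd} right, with $u=i$, $v=i+1$ and $s=i$.
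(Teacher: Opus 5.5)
Your proof is correct, but it takes a different route from the paper. The paper computes $R_d(i)=\sum_j d(j)|i-j|$ directly. It counts every degree as $2$, subtracts the endpoint corrections $(i-1)+(n-i)$, and inserts $\sum_j|i-j|=\binom{i}{2}+\binom{n-i+1}{2}$.

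You instead use Lemma~\ref{lem:tree-edge-Rd} to obtain the first differences $R_d(i+1)-R_d(i)=4i-2n$ and match them against $f$. That reduces everything to the single endpoint sum $R_d(1)=(n-1)^2$. Your sign convention ($u=i$, $v=i+1$, $s=i$) and the degenerate case $n=2$ are both handled correctly.

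The paper's computation is self-contained: it needs neither the tree lemma nor an induction. Your version reuses the edge-telescoping mechanism that the paper itself applies in the proof of Theorem~\ref{thm:tree-upper-bound}. It also confines the only explicit summation to an endpoint, where it is trivial. Finally, it shows directly from the increments $4i-2n$ that $R_d$ decreases toward the middle of the path and increases afterwards.

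For the symmetry $R_d(i)=R_d(n+1-i)$, either your algebraic substitution or the automorphism $i\mapsto n+1-i$ suffices.
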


\begin{proof}

By definition,
\[
R_d(i)
=
\sum_{j=1}^{n}d(j)|i-j|.
\]
Counting all degrees first as $2$ and then subtracting one contribution
from each end vertex gives
\[
R_d(i)
=
2\sum_{j=1}^{n}|i-j|
-(i-1)-(n-i).
\]
Since
\[
\sum_{j=1}^{n}|i-j|
=
\frac{i(i-1)}{2}
+
\frac{(n-i)(n-i+1)}{2},
\]
we obtain
\[
R_d(i)
=
2i^2-2(n+1)i+n^2+1.
\]
The symmetry
\[
R_d(i)=R_d(n+1-i)
\]
follows immediately.

\end{proof}

\begin{theorem}
\label{thm:path-asymmetry}

For $n\geq2$,
\[
\mathcal A(P_n)
=
\begin{cases}
\dfrac{n^2(n^2-4)}{24},
& n \text{ is even},\\[8pt]
\dfrac{(n^2-1)(n^2-3)}{24},
& n \text{ is odd}.
\end{cases}
\]

\end{theorem}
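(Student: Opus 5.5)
The plan is to use Lemma~\ref{lem:path-Rd} directly. It writes $R_d(i)$ as a quadratic in $i$:
\[
R_d(i)=2\Bigl(i-\tfrac{n+1}{2}\Bigr)^2+\text{const}.
\]
Consequently, for $i,j\in\{1,\ldots,n\}$,
\[
R_d(i)-R_d(j)=2\bigl(a_i^2-a_j^2\bigr),\qquad a_i=i-\tfrac{n+1}{2}.
\]
The asymmetry term therefore reduces to
\[
\mathcal A(P_n)=\sum_{1\le i<j\le n}\bigl|a_i^2-a_j^2\bigr|,
\]
where the $a_i$ run over the symmetric set $\{-\tfrac{n-1}{2},\ldots,\tfrac{n-1}{2}\}$. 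No individual hitting times are needed.

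The most transparent route is Proposition~\ref{prop:asymmetry-gaps}. Sort the vertices by $|a_i|$. The values $a_i^2$ come in equal pairs $\{i,n+1-i\}$, plus a single value $0$ at the middle vertex when $n$ is odd. Only the gaps between distinct levels contribute.

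For $n=2r$ even, the levels are $b_t=\bigl(t-\tfrac12\bigr)^2$ for $t=1,\ldots,r$, each taken twice. The gap between levels $t$ and $t+1$ is $b_{t+1}-b_t=2t$. It sits at position $k=2t$ in the sorted order, so its weight is $k(n-k)=2t(2r-2t)$. Proposition~\ref{prop:asymmetry-gaps}, with the factor $2$ from $R_d$ and the factor $\tfrac12$ in the proposition, gives
\[
\mathcal A(P_{2r})=\sum_{t=1}^{r-1}2t(2r-2t)\cdot 2t=8\sum_{t=1}^{r-1}t^2(r-t).
\]

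For $n=2r+1$ odd, the levels are $t^2$ for $t=0,\ldots,r$, with $0$ taken once and the others twice. The gap $t^2-(t-1)^2=2t-1$ sits at position $k=2t-1$, with weight $(2t-1)(2r+2-2t)$. This gives
\[
\mathcal A(P_{2r+1})=2\sum_{t=1}^{r}(2t-1)^2(r+1-t).
\]

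The last step evaluates these two sums with the standard formulas for $\sum t^2$ and $\sum t^3$, then substitutes $r=n/2$ or $r=(n-1)/2$. For instance,
\[
\sum_{t=1}^{r-1}t^2(r-t)=\frac{r^2(r^2-1)}{12},
\]
so $\mathcal A(P_{2r})=\tfrac{2}{3}r^2(r^2-1)$, which equals $n^2(n^2-4)/24$ when $r=n/2$. The odd sum should similarly collapse to $(n^2-1)(n^2-3)/24$.

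The main obstacle is bookkeeping rather than ideas. One has to place each gap at the correct sorted position, given the tied pairs and the lone middle vertex in the odd case, and then carry out the polynomial simplification. As a safeguard, I would check small cases: $n=2$ gives $0$, $n=3$ gives $2$, and $n=4$ gives $8$. Both closed forms match these values.
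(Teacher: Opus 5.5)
Your proposal is correct, but it evaluates the sum by a different route than the paper. Both arguments start from Lemma~\ref{lem:path-Rd} and the pairing $R_d(i)=R_d(n+1-i)$. From there, the paper sums $|R_d(u)-R_d(v)|$ directly over pairs of distinct levels. Each pair of doubled levels gets weight $4$, and when $n$ is odd each pair involving the middle vertex gets weight $2$. This produces double sums such as $\sum_{0\le s<t\le r-1}\bigl(t(t+1)-s(s+1)\bigr)$ and $\sum_{1\le s<t\le r}(t^2-s^2)$, which the paper evaluates through an inner-sum identity.

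You instead feed the sorted levels into Proposition~\ref{prop:asymmetry-gaps}. Each double sum then collapses to a single weighted sum of consecutive gaps, namely $8\sum_{t=1}^{r-1}t^2(r-t)$ and $2\sum_{t=1}^{r}(2t-1)^2(r+1-t)$. Your centering $a_i=i-\frac{n+1}{2}$ also handles both parities uniformly. Your gap positions ($k=2t$ in the even case, $k=2t-1$ in the odd case), the weights $k(n-k)$, and the factor bookkeeping are all right. What your route buys is single sums in place of double sums, and a reuse of the paper's own gap representation. What it costs is dependence on that representation and careful placement of ties, whereas the paper's count is self-contained.

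The one thing you should actually finish is the odd case, which you leave as ``should similarly collapse.'' It does. Using $\sum_{t=1}^{r}(2t-1)^2=\frac{r(4r^2-1)}{3}$ and $\sum_{t=1}^{r}t(2t-1)^2=\frac{r(r+1)(6r^2-2r-1)}{6}$, you get $\sum_{t=1}^{r}(2t-1)^2(r+1-t)=\frac{r(r+1)(2r^2+2r-1)}{6}$. Hence $\mathcal A(P_{2r+1})=\frac{r(r+1)(2r^2+2r-1)}{3}=\frac{(n^2-1)(n^2-3)}{24}$, exactly the paper's intermediate value and final formula.
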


\begin{proof}

First, let $n=2r$.
By Lemma~\ref{lem:path-Rd},
\[
R_d(r-s)
=
2r^2-2r+1+2s(s+1)
\qquad
(0\leq s\leq r-1),
\]
and each of these values occurs twice. Therefore,
\[
\mathcal A(P_{2r})
=
4\sum_{0\leq s<t\leq r-1}
\left(
t(t+1)-s(s+1)
\right).
\]
Using
\[
\sum_{s=0}^{t-1}
\left(
t(t+1)-s(s+1)
\right)
=
\frac{t(t+1)(2t+1)}{3},
\]
we obtain
\[
\mathcal A(P_{2r})
=
\frac{2r^2(r^2-1)}{3}.
\]
Substituting $n=2r$ gives
\[
\mathcal A(P_n)
=
\frac{n^2(n^2-4)}{24}.
\]

Now let $n=2r+1$. Then
\[
R_d(r+1-s)
=
2(r^2+s^2)
\qquad
(0\leq s\leq r).
\]
The value corresponding to $s=0$ occurs once,
while each value corresponding to $1\leq s\leq r$ occurs twice.
Hence,
\[
\mathcal A(P_{2r+1})
=
2\sum_{s=1}^{r}s^2
+
4\sum_{1\leq s<t\leq r}(t^2-s^2).
\]
For the second sum,
\[
\begin{aligned}
\sum_{1\leq s<t\leq r}(t^2-s^2)
&=
\sum_{t=2}^{r}
\left(
(t-1)t^2-\sum_{s=1}^{t-1}s^2
\right)\\
&=
\frac{r(r-1)(r+1)^2}{6}.
\end{aligned}
\]
Using
\[
\sum_{s=1}^{r}s^2
=
\frac{r(r+1)(2r+1)}{6},
\]
we obtain
\[
\mathcal A(P_{2r+1})
=
\frac{r(r+1)(2r^2+2r-1)}{3}.
\]
Since $n=2r+1$,
\[
r(r+1)=\frac{n^2-1}{4}
\]
and
\[
2r^2+2r-1
=
\frac{n^2-3}{2}.
\]
Therefore,
\[
\mathcal A(P_n)
=
\frac{(n^2-1)(n^2-3)}{24}.
\]

\end{proof}


Since
\[
\Kf(P_n)=W(P_n)=\frac{n(n^2-1)}{6}
\]
and $|E(P_n)|=n-1$, we obtain the following formula.

\begin{corollary}
\label{cor:path-HT}

For $n\geq2$,
\[
\HT(P_n)
=
\begin{cases}
\dfrac{n(n-1)(n^2-1)}{6}
+
\dfrac{n^2(n^2-4)}{24},
& n \text{ is even},\\[10pt]
\dfrac{n(n-1)(n^2-1)}{6}
+
\dfrac{(n^2-1)(n^2-3)}{24},
& n \text{ is odd}.
\end{cases}
\]

\end{corollary}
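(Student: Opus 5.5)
This follows directly from Theorem~\ref{thm:HT-decomposition}, which gives $\HT(P_n)=|E(P_n)|\,\Kf(P_n)+\mathcal A(P_n)$. We substitute the three ingredients already available. First, $P_n$ is a tree, so $\Kf(P_n)=W(P_n)$. Second, $|E(P_n)|=n-1$. Third, Theorem~\ref{thm:path-asymmetry} supplies $\mathcal A(P_n)$ separately for even and odd $n$.

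The only quantity not yet justified in the text is the classical value $W(P_n)=n(n^2-1)/6$, and I would verify it first. Pairs at distance $k$ number $n-k$, so
\[
W(P_n)=\sum_{k=1}^{n-1}k(n-k)=n\cdot\frac{n(n-1)}{2}-\frac{(n-1)n(2n-1)}{6}=\frac{n(n-1)(n+1)}{6}.
\]
Multiplying by $n-1$ then gives the common first term $n(n-1)(n^2-1)/6$.

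Finally, I would split into the cases $n$ even and $n$ odd. In each case the second summand is read off from Theorem~\ref{thm:path-asymmetry}, which yields exactly the two displayed formulas. There is no real obstacle here; the only point requiring care is the Wiener index sum. As a sanity check, I would take $n=2$, where the formula gives $1+0=1=H_{12}$, and $n=3$, where it gives $8+2=10$. The latter should agree with the star value $(n-1)(n^2-n-1)=2\cdot 5=10$, since $P_3=K_{1,2}$.
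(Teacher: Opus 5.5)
Your proposal is correct and matches the paper's proof. The paper likewise substitutes $|E(P_n)|=n-1$, $\Kf(P_n)=W(P_n)=n(n^2-1)/6$ and Theorem~\ref{thm:path-asymmetry} into $\HT(P_n)=(n-1)\Kf(P_n)+\mathcal A(P_n)$; your derivation of $W(P_n)$ (which the paper simply quotes as known) and your checks at $n=2,3$ are correct additions.
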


\begin{proof}

This follows directly from
\[
\HT(P_n)
=
(n-1)\Kf(P_n)+\mathcal A(P_n)
\]
and Theorem~\ref{thm:path-asymmetry}.

\end{proof}

The formula in Corollary~\ref{cor:path-HT}
recovers the path formula of Palacios \cite{Palacios2025}.
For $n\geq3$, it also agrees with the case $k=1$
of the formula in \cite{PetojevicPalaciosOrlic2026}.

\begin{remark}
\label{rem:path-asymptotic-bound}
The formula in Theorem~\ref{thm:path-asymmetry} gives
\[
\mathcal A(P_n)\sim\frac{n^4}{24}
\qquad
(n\to\infty).
\]
On the other hand, applying the upper bound in
Theorem~\ref{thm:asymmetry-range} to $P_n$ gives an upper bound
asymptotic to
\[
\frac{n^4}{16}.
\]
Consequently, the ratio of $\mathcal A(P_n)$ to this upper bound
tends to
\[
\frac23.
\]
Thus, the general upper bound in
Theorem~\ref{thm:asymmetry-range} is not asymptotically attained
by paths.

\end{remark}

\subsection{Conjoined complete graphs}

We finally consider graphs obtained by identifying one vertex
of two or more complete graphs.
In these graphs, the effective resistances take only a few values,
so the asymmetry term can be computed directly.

Let $r,t\geq2$, and let $G_{r,t}$ be the graph obtained from
$K_r$ and $K_t$ by identifying one vertex from each graph.
Denote the common vertex by $c$, and put
\[
X=V(K_r)\setminus\{c\},
\qquad
Y=V(K_t)\setminus\{c\}.
\]
Thus,
\[
|X|=r-1,
\qquad
|Y|=t-1.
\]

Since the two complete graphs meet only at the cut vertex $c$,
attaching one block does not change the effective resistance
between vertices of the other block.
Moreover, for vertices belonging to different blocks,
the effective resistances add in series through $c$.
Therefore, for $x\in X$ and $y\in Y$, the effective resistances are
\[
R_G(c,x)=\frac{2}{r},
\qquad
R_G(c,y)=\frac{2}{t},
\]
\[
R_G(x,x')=\frac{2}{r}
\quad
(x,x'\in X,\ x\neq x'),
\qquad
R_G(y,y')=\frac{2}{t}
\quad
(y,y'\in Y,\ y\neq y'),
\]
and
\[
R_G(x,y)
=
\frac{2}{r}+\frac{2}{t}.
\]

\begin{proposition}
\label{prop:conjoined-two-complete}

For $G_{r,t}$,
\[
R_d(x)-R_d(c)
=
\frac{2t(t-1)}{r}
\qquad
(x\in X)
\]
and
\[
R_d(y)-R_d(c)
=
\frac{2r(r-1)}{t}
\qquad
(y\in Y).
\]
Consequently,
\[
\mathcal A(G_{r,t})
=
(r-1)(t-1)
\left\{
\frac{t}{r}
+
\frac{r}{t}
+
\frac{|t-r|}{rt}
\left(
t^2+rt+r^2-t-r
\right)
\right\}.
\]

\end{proposition}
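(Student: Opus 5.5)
The plan is to compute $R_d$ at the three types of vertices directly from the listed effective resistances, using the degrees $d(c)=r+t-2$, $d(x)=r-1$ for $x\in X$, and $d(y)=t-1$ for $y\in Y$. Only the differences $R_d(x)-R_d(c)$ and $R_d(y)-R_d(c)$ matter for $\mathcal A$, so I will compute those directly, without expanding each $R_d$ in full.

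For $x\in X$ I will split the sum defining $R_d(x)-R_d(c)$ over the target vertices $z$.

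For $z\in Y$, we have $R_G(x,z)-R_G(c,z)=2/r$. This term contributes $(t-1)\cdot(t-1)\cdot 2/r$.

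For $z\in X\setminus\{x\}$, the two resistances are both $2/r$, so the difference vanishes.

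For $z=x$ and $z=c$ we get $d(c)R_G(x,c)-d(x)R_G(c,x)=\frac{2}{r}\bigl((r+t-2)-(r-1)\bigr)=\frac{2(t-1)}{r}$.

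Adding, $R_d(x)-R_d(c)=\frac{2(t-1)}{r}\bigl((t-1)+1\bigr)=\frac{2t(t-1)}{r}$. The formula for $y\in Y$ follows by the symmetry $r\leftrightarrow t$, $X\leftrightarrow Y$.

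Next, $R_d$ is constant on $X$ and constant on $Y$, so only three kinds of pairs contribute to $\mathcal A(G_{r,t})$:
\begin{itemize}
\item the $r-1$ pairs $\{c,x\}$, each with difference $2t(t-1)/r$;
\item the $t-1$ pairs $\{c,y\}$, each with difference $2r(r-1)/t$;
\item the $(r-1)(t-1)$ pairs $\{x,y\}$, each with difference $\bigl|\frac{2t(t-1)}{r}-\frac{2r(r-1)}{t}\bigr|$.
\end{itemize}
Halving, the first two kinds together give $(r-1)(t-1)\bigl(\frac{t}{r}+\frac{r}{t}\bigr)$. The third gives $(r-1)(t-1)\,\frac{|t^2(t-1)-r^2(r-1)|}{rt}$.

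The only real step is the algebraic factorization
\[
t^3-r^3-(t^2-r^2)=(t-r)\bigl(t^2+rt+r^2-t-r\bigr).
\]
The second factor is positive for $r,t\geq2$, so the absolute value becomes $|t-r|\,(t^2+rt+r^2-t-r)$. This yields the stated formula.

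I expect no genuine obstacle. The point requiring care is the bookkeeping in the first step, namely correctly separating the $z=c$ and $z=x$ terms, where degrees differ. I will sanity-check the difference formula with $r=t$, where the $x$–$y$ contribution must vanish, and with $r=2$, where $G_{2,t}$ is $K_t$ with a pendant vertex.
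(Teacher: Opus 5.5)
Your proposal is correct and follows essentially the same route as the paper: compute the $R_d$ values from the listed resistances, sort the contributing pairs into the $\{c,x\}$, $\{c,y\}$ and $\{x,y\}$ types, and factor $t^2(t-1)-r^2(r-1)=(t-r)(t^2+rt+r^2-t-r)$. The only differences are cosmetic. You compute $R_d(x)-R_d(c)$ term by term rather than first writing out $R_d(c)$ and $R_d(x)$ in full, and you explicitly note that the second factor is positive, so the absolute value falls on $|t-r|$; the paper leaves that step implicit.
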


\begin{proof}

The common vertex has degree
\[
d(c)=r+t-2,
\]
while
\[
d(x)=r-1
\quad (x\in X)
\qquad
\text{and}
\qquad
d(y)=t-1
\qquad (y\in Y).
\]
Using the resistance formulas above, we first have
\[
R_d(c)
=
\frac{2(r-1)^2}{r}
+
\frac{2(t-1)^2}{t}.
\]
For $x\in X$,
\[
\begin{aligned}
R_d(x)
&=
(r+t-2)\frac{2}{r}
+
(r-2)(r-1)\frac{2}{r}
+
(t-1)^2
\left(
\frac{2}{r}+\frac{2}{t}
\right).
\end{aligned}
\]
Subtracting $R_d(c)$ gives
\[
R_d(x)-R_d(c)
=
\frac{2t(t-1)}{r}.
\]
Similarly, for $y\in Y$,
\[
R_d(y)-R_d(c)
=
\frac{2r(r-1)}{t}.
\]
Put
\[
a=\frac{2t(t-1)}{r},
\qquad
b=\frac{2r(r-1)}{t}.
\]
All vertices in $X$ have the value $R_d(c)+a$,
and all vertices in $Y$ have the value $R_d(c)+b$.
Therefore,
\[
\mathcal A(G_{r,t})
=
\frac12
\left\{
(r-1)a
+
(t-1)b
+
(r-1)(t-1)|a-b|
\right\}.
\]
Moreover,
\[
|a-b|
=
\frac{2|t-r|}{rt}
\left(
t^2+rt+r^2-t-r
\right).
\]
Substitution gives the stated formula.

\end{proof}

The pairs contained in the $K_r$ block contribute
\[
\binom{r}{2}\frac{2}{r}
=
r-1,
\]
and those contained in the $K_t$ block contribute
\[
\binom{t}{2}\frac{2}{t}
=
t-1.
\]
The $(r-1)(t-1)$ pairs with one vertex in $X$ and one in $Y$
have resistance
\[
\frac{2}{r}+\frac{2}{t}.
\]
Hence,
\[
\begin{aligned}
\Kf(G_{r,t})
&=
(r-1)+(t-1)+(r-1)(t-1)
\left(
\frac{2}{r}+\frac{2}{t}
\right)\\
&=
r+t-2
+
2(r-1)(t-1)
\left(
\frac1r+\frac1t
\right).
\end{aligned}
\]
This is also a special case of the results of
Markowsky and Palacios \cite{MarkowskyPalacios2021}.

The number of edges is
\[
|E(G_{r,t})|
=
\frac{r(r-1)}{2}
+
\frac{t(t-1)}{2}.
\]
Hence,
\[
\HT(G_{r,t})
=
|E(G_{r,t})|\Kf(G_{r,t})
+
\mathcal A(G_{r,t}).
\]
Thus, the hitting time index can be obtained without calculating
all individual hitting times.
The resulting expression agrees with the corresponding formula in
\cite{PalaciosSantamaria2025}.

We next consider the case in which several identical complete graphs
share one common vertex.
Let $s\geq2$ and $r\geq2$, and let $G_{s,r}$ be obtained from
$s$ copies of $K_r$ by identifying one vertex from each copy.

\begin{proposition}
\label{prop:conjoined-identical-complete}

For $G_{s,r}$,
\[
\mathcal A(G_{s,r})
=
s(s-1)(r-1)^2.
\]
Moreover,
\[
\Kf(G_{s,r})
=
s(r-1)
+
\frac{2s(s-1)(r-1)^2}{r}.
\]
Consequently,
\[
\HT(G_{s,r})
=
\frac{s r(r-1)}{2}
\left\{
s(r-1)
+
\frac{2s(s-1)(r-1)^2}{r}
\right\}
+
s(s-1)(r-1)^2.
\]

\end{proposition}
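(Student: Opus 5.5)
The plan follows the proof of Proposition~\ref{prop:conjoined-two-complete}. We compute $R_d$ at the common vertex $c$ and at the non-central vertices, read off $\mathcal A$ from the fact that only two distinct values occur, and then compute $\Kf$ by sorting pairs according to their resistance.

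The graph $G_{s,r}$ has $n=1+s(r-1)$ vertices and $sr(r-1)/2$ edges. The degrees are $d(c)=s(r-1)$, and $d(x)=r-1$ for every other vertex $x$. Resistances within a block equal $2/r$, since attaching other blocks at the cut vertex does not change them. For two non-central vertices in different blocks, resistances add in series through $c$, giving $4/r$.

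First compute $R_d(c)$. There are $s(r-1)$ non-central vertices, each of degree $r-1$ and at resistance $2/r$ from $c$, so
\[
R_d(c)=\frac{2s(r-1)^2}{r}.
\]
Next take a non-central vertex $x$ in some block. Its contributions are as follows.
\begin{itemize}
\item The vertex $c$ contributes $s(r-1)\cdot\frac{2}{r}$.
\item The $r-2$ other non-central vertices of its own block contribute $(r-2)(r-1)\cdot\frac{2}{r}$.
\item The $(s-1)(r-1)$ vertices of the other blocks contribute $(s-1)(r-1)^2\cdot\frac{4}{r}$.
\end{itemize}
Subtracting $R_d(c)$, the central term and the own-block term together give $\frac{2(r-1)}{r}(s+r-2)$, and the remaining pieces give $\frac{2(r-1)^2}{r}(2s-2-s)$. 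Hence
\[
R_d(x)-R_d(c)=\frac{2(r-1)}{r}\bigl(s+r-2+(r-1)(s-2)\bigr)=\frac{2(r-1)}{r}\cdot r(s-1)=2(r-1)(s-1).
\]
This simplification is the only step with any risk of error, and it should be double-checked.

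By symmetry, all non-central vertices share the same value of $R_d$. Therefore only the $s(r-1)$ pairs $\{c,x\}$ contribute to $\mathcal A$, and Definition~\ref{def:asymmetry} gives
\[
\mathcal A(G_{s,r})=\tfrac12\, s(r-1)\cdot 2(r-1)(s-1)=s(s-1)(r-1)^2.
\]

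For the Kirchhoff index, the pairs inside each block contribute $\binom r2\frac2r=r-1$, for a total of $s(r-1)$ over all blocks. The cross pairs number $\binom s2(r-1)^2$, each with resistance $4/r$, contributing $\frac{2s(s-1)(r-1)^2}{r}$. This gives the stated $\Kf(G_{s,r})$. Finally, $\HT(G_{s,r})$ follows from Theorem~\ref{thm:HT-decomposition} with $m=sr(r-1)/2$.
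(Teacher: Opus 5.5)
Your proposal is correct and follows essentially the same route as the paper: you compute $R_d(c)$ and $R_d(x)$ for a non-central vertex $x$, obtain the difference $2(s-1)(r-1)$, and count the $s(r-1)$ center--non-center pairs to get $\mathcal A$. You then sort pairs by resistance for $\Kf$ and apply Theorem~\ref{thm:HT-decomposition} with $m=sr(r-1)/2$. The simplification you flagged is right, since $s+r-2+(r-1)(s-2)=r(s-1)$.
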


\begin{proof}

Let $c$ be the common vertex.
Every noncentral vertex has degree $r-1$, while
\[
d(c)=s(r-1).
\]
By symmetry, all noncentral vertices have the same value of $R_d$.
For the common vertex,
\[
R_d(c)
=
s(r-1)^2\frac{2}{r}
=
\frac{2s(r-1)^2}{r}.
\]
Let $u$ be a noncentral vertex.
The contributions to $R_d(u)$ come from the common vertex,
the other vertices in the same copy of $K_r$,
and the vertices in the remaining $s-1$ copies.
Hence,
\[
\begin{aligned}
R_d(u)
&=
s(r-1)\frac{2}{r}
+
(r-2)(r-1)\frac{2}{r}
+
(s-1)(r-1)^2\frac{4}{r}.
\end{aligned}
\]
Therefore,
\[
R_d(u)-R_d(c)
=
2(s-1)(r-1).
\]
There are $s(r-1)$ noncentral vertices. Hence,
\[
\begin{aligned}
\mathcal A(G_{s,r})
&=
\frac12
s(r-1)\cdot2(s-1)(r-1)\\
&=
s(s-1)(r-1)^2.
\end{aligned}
\]

For the Kirchhoff index, the pairs contained in a single copy of $K_r$
contribute
\[
s\binom{r}{2}\frac{2}{r}
=
s(r-1),
\]
while pairs of noncentral vertices belonging to different copies contribute
\[
\binom{s}{2}(r-1)^2\frac{4}{r}
=
\frac{2s(s-1)(r-1)^2}{r}.
\]
Therefore,
\[
\Kf(G_{s,r})
=
s(r-1)
+
\frac{2s(s-1)(r-1)^2}{r}.
\]
Finally,
\[
|E(G_{s,r})|
=
s\frac{r(r-1)}{2},
\]
and the formula for $\HT(G_{s,r})$ follows from
Theorem~\ref{thm:HT-decomposition}.

\end{proof}

The formulas above give another derivation of the corresponding
hitting time indices considered in \cite{PalaciosSantamaria2025}.

\section{Conclusion}
\label{sec:conclusion}
In this paper, we considered the decomposition
\[
\HT(G)=m\Kf(G)+\mathcal A(G),
\]
where
\[
\mathcal A(G)
=
\frac12
\sum_{\{u,v\}\subseteq V}
|H_{uv}-H_{vu}|
=
\frac12
\sum_{\{u,v\}\subseteq V}
|R_d(u)-R_d(v)|.
\]
Thus, the decomposition provides a direct relation
between a random-walk-based graph invariant
and classical resistance- and distance-based graph invariants.
By ordering the values of $R_d$, we obtained
\[
\mathcal A(G)
=
\frac12
\sum_{k=1}^{n-1}k(n-k)\Delta_k,
\]
which yields lower and upper bounds for $\mathcal A(G)$
in terms of $R_d^{\max}-R_d^{\min}$,
together with their equality conditions.

For trees, we obtained the two-sided estimate
\[
(n-1)(n-2)
\leq
\mathcal A(T)
\leq
(n-2)W(T).
\]
The lower bound is attained if and only if
\[
T\cong K_{1,n-1}.
\]
Consequently,
\[
(n-1)W(T)+(n-1)(n-2)
\leq
\HT(T)
\leq
(2n-3)W(T).
\]
Together with the known minimum of the Wiener index among trees,
the lower bound also recovers that the star minimizes
the hitting time index among trees of fixed order.

We also calculated the asymmetry term for complete bipartite graphs
and paths, and applied the decomposition to graphs obtained
by conjoining complete graphs.
In the latter examples, known hitting-time formulas can be recovered
from effective-resistance calculations without determining
all individual hitting times.

Several questions remain for further study.
In particular, it would be interesting to determine a sharper
upper bound for $\mathcal A(T)$ among trees of fixed order.
The explicit formulas for paths suggest the question of whether
\[
\mathcal A(T)\leq\mathcal A(P_n)
\]
holds for every tree $T$ on $n$ vertices,
with equality only for the path $P_n$.
Other natural questions concern the behavior of $\mathcal A(G)$
under graph operations and its determination for graph families
in which the degree-weighted resistance sums take only a few
distinct values.

\section*{Statements and Declarations}

\subsection*{Funding}
The authors declare that no funds, grants, or other support were received
during the preparation of this manuscript.

\subsection*{Competing Interests}
The authors have no relevant financial or non-financial interests to disclose.

\subsection*{Data Availability}
Data sharing is not applicable to this article as no datasets were generated
or analysed during the current study.



\begin{thebibliography}{99}

\bibitem{ChangXu2017}

X.~Chang and H.~Xu,
Chung--Yau invariants and graphs with symmetric hitting times,
\textit{J. Graph Theory}
\textbf{85}, 691--705 (2017).
\url{https://doi.org/10.1002/jgt.22099}


\bibitem{ChandraEtAl1989}

A.~K.~Chandra, P.~Raghavan, W.~L.~Ruzzo,
R.~Smolensky and P.~Tiwari,
The electrical resistance of a graph captures its commute and cover times,
in: \textit{Proceedings of the Twenty-First Annual ACM Symposium on Theory of Computing},
pp.~574--586 (1989).
\url{https://doi.org/10.1145/73007.73062}


\bibitem{Rada2019}

R.~Cruz and J.~Rada,
The path and the star as extremal values of vertex-degree-based
topological indices among trees,
\textit{MATCH Commun. Math. Comput. Chem.}
\textbf{82}, 715--732 (2019).


\bibitem{DobryninEntringerGutman2001}

A.~A.~Dobrynin, R.~Entringer and I.~Gutman,
Wiener index of trees: theory and applications,
\textit{Acta Appl. Math.}
\textbf{66}, 211--249 (2001).
\url{https://doi.org/10.1023/A:1010767517079}


\bibitem{DoyleSnell1984}

P.~G.~Doyle and J.~L.~Snell,
\textit{Random Walks and Electrical Networks},
Carus Mathematical Monographs, Vol.~22,
Mathematical Association of America,
Washington, DC (1984).


\bibitem{GutmanMohar1996}

I.~Gutman and B.~Mohar,
The quasi-Wiener and the Kirchhoff indices coincide,
\textit{J. Chem. Inf. Comput. Sci.}
\textbf{36}, 982--985 (1996).
\url{https://doi.org/10.1021/ci960007t}


\bibitem{HeEtAl2026}

M.~He, Y.~Mao, B.~Furtula and X.~Zhang,
Hitting time index of trees,
\textit{J. Math. Chem.}
\textbf{64}, Article~7 (2026).
\url{https://doi.org/10.1007/s10910-025-01767-0}


\bibitem{KleinRandic1993}

D.~J.~Klein and M.~Randi\'c,
Resistance distance,
\textit{J. Math. Chem.}
\textbf{12}, 81--95 (1993).
\url{https://doi.org/10.1007/BF01164627}


\bibitem{MarkowskyPalacios2021}

G.~Markowsky and J.~L.~Palacios,
Kemeny's constant and Kirchhoffian indices for conjoined highly symmetric graphs,
\textit{Discrete Appl. Math.}
\textbf{302}, 215--220 (2021).
\url{https://doi.org/10.1016/j.dam.2021.07.007}


\bibitem{OrlicEtAl2026}

S.~Orli\'c, J.~L.~Palacios and A.~Petojevi\'c,
Hitting time index for broom graphs,
\textit{AppliedMath}
\textbf{6}, Article~93 (2026).
\url{https://doi.org/10.3390/appliedmath6060093}


\bibitem{Palacios2025}

J.~L.~Palacios,
A new probabilistic molecular index,
\textit{MATCH Commun. Math. Comput. Chem.}
\textbf{93}, 499--507 (2025).
\url{https://doi.org/10.46793/match.92-2.499P}


\bibitem{PalaciosPetojevic2026}

J.~L.~Palacios and A.~Petojevi\'c,
An edge operation that increases the hitting time index of trees,
\textit{Mathematics}
\textbf{14}, Article~2525 (2026).
\url{https://doi.org/10.3390/math14142525}


\bibitem{PalaciosSantamariaGraphs2025}

J.~L.~Palacios and A.~Santamar\'ia,
On the hitting time index of graphs,
\textit{Open J. Appl. Sci.}
\textbf{15}, 2464--2478 (2025).
\url{https://doi.org/10.4236/ojapps.2025.158164}


\bibitem{PalaciosSantamaria2025}

J.~L.~Palacios and A.~Santamar\'ia,
Some new results for the hitting time index,
\textit{MATCH Commun. Math. Comput. Chem.}
\textbf{94}, 427--445 (2025).
\url{https://doi.org/10.46793/match.94-2.427P}


\bibitem{PetojevicPalaciosOrlic2026}

A.~Petojevi\'c, J.~L.~Palacios and S.~Orli\'c,
A new formula for the hitting time index of broom graphs,
\textit{Mathematics}
\textbf{14}, Article~2174 (2026).
\url{https://doi.org/10.3390/math14122174}


\bibitem{Tetali1991}

P.~Tetali,
Random walks and the effective resistance of networks,
\textit{J. Theor. Probab.}
\textbf{4}, 101--109 (1991).
\url{https://doi.org/10.1007/BF01046996}

\end{thebibliography}
\end{document}